\documentclass[11pt]{article}

\usepackage[english]{babel}
\usepackage{amsmath,amssymb,amsfonts,amsthm,mathrsfs,bbm}
\usepackage{graphicx,xcolor}
\usepackage{enumerate,geometry,marginnote,afterpage}
\usepackage{fancyhdr,courier,type1cm,dsfont,float,tikz}
\usepackage{hyperref}
\usepackage{verbatim}
\usepackage[normalem]{ulem}
\usetikzlibrary{arrows, positioning}

\numberwithin{equation}{section}

\theoremstyle{plain} 

\newtheorem{theorem}{Theorem}[section]
\newtheorem{corollary}[theorem]{Corollary}
\newtheorem{lemma}[theorem]{Lemma}

\newcommand{\ee}{\mathrm{e}}
\newcommand{\N}{\mathbb{N}}
\newcommand{\R}{\mathbb{R}}
\newcommand{\PP}{\mathbb{P}}
\newcommand{\EE}{\mathbb{E}}



\date{\today}

\begin{document}

\title{Renormalisation of Inhomogeneous Random Graphs}

\author{
\renewcommand{\thefootnote}{\arabic{footnote}}
Luca Avena
\footnotemark[1]
\\
\renewcommand{\thefootnote}{\arabic{footnote}}
Diego Garlaschelli
\footnotemark[2]
\\
\renewcommand{\thefootnote}{\arabic{footnote}}
Rajat Subhra Hazra
\footnotemark[3]
\\
\renewcommand{\thefootnote}{\arabic{footnote}}
Frank den Hollander
\footnotemark[3]
}

\footnotetext[1]{
Dipartimento di Matematica e Informatica `Ulisse Dini', Universit\`a degli Studi di Firenze, Viale Giovanni Battista Morgagni, 67/a, 50134 Firenze, Italy.\\ 
\texttt{luca.avena@unifi.it}
}

\footnotetext[2]{
IMT Scuola di Alti Studi, Lucca, Italy \& Lorentz Institute for Theoretical Physics, Leiden University, The Netherlands.\\ 
\texttt{diego.garlaschelli@imtlucca.it}
}

\footnotetext[3]{
Mathematical Institute, Leiden University, Einsteinweg 55, 2333 CC Leiden, The Netherlands.\\
\texttt{\{r.s.hazra,denholla\}@math.leidenuniv.nl}
}

\maketitle

\begin{abstract}
We consider inhomogeneous random graphs in which vertices are assigned i.i.d.\ random weights, pairs of distinct vertices are connected by an edge independently with a probability that is a bi-variate function of the weights of the vertices, and single vertices are connected to themselves by a self-loop independently with a probability that is a uni-variate function of the weight of the vertex. We apply a renormalisation transformation in which vertices are aggregated into groups of equal size according to a greedy algorithm, namely, distinct groups of aggregated vertices are connected by an aggregated edge if and only if there is at least one edge connecting two constituent vertices across the groups, while a group of aggregated vertices is connected to itself by an aggregated self-loop if and only if there is at least one self-loop at an internal vertex or one edge connecting a pair of distinct internal vertices. We analyse what happens when the renormalisation transformation is iterated. In particular, we show that, starting from appropriately scaled connection functions, the iterated renormalised graphs converge to a two-parameter family of random graphs, acting as an attractor in a universality class. We consider a light-tailed regime, for which the scaling limit is a homogeneous Erd\H{o}s--R\'enyi random graph, and a heavy-tailed regime, for which the scaling limit is an inhomogeneous random graph with stable infinite-mean random weights and an exponential disconnection function. Different scalings are needed for the two regimes. Which of the two regimes prevails depends on the choice of the connection functions and the choice of the law of the random weights.   

\medskip\noindent
\emph{MSC 2020:}
05C80; 
60C05; 
60K35; 
60K37; 

\medskip\noindent
{\it Key words and phrases.} 
Inhomogeneous random graphs, stable law random graphs, random weights, aggregation, network renormalisation, iteration, universality.

\medskip\noindent
{\it Acknowledgment.} 
LA \& RSH \& FdH were supported through NWO Gravitation Grant NETWORKS-024.002.003. DG was supported through the project \emph{Network renormalization: from theoretical physics to the resilience of societies}, file number NWA.1418.24.029 of the research programme NWA L3 -- Innovative projects within routes 2024, and the project \emph{Redefining renormalization for complex networks}, file number OCENW.M.24.039, both of which are partly financed by NWO (see \url{https://doi.org/10.61686/AOIJP05368} and \url{https://doi.org/10.61686/PBSEC42210}).
\end{abstract}


\newpage


\section{Introduction and main results}

The goal of this paper is to carry out a \emph{renormalisation analysis} of \emph{inhomogeneous random graphs with random weights}. The renormalisation map considered in this paper acts on the vertices and the weight-dependent connection probabilities in the form of an \emph{aggregation algorithm}. The target is to identify \emph{universal attractors} for the iterates of the renormalisation map. Section~\ref{ss.bm} provides background and motivation. Section~\ref{ss.model} describes the model. Section~\ref{ss.map} identifies the renormalisation map. Section~\ref{ss.theorems} states the main theorems. Section~\ref{ss.discussion} offers a discussion of the main theorems and lists open problems. Section~\ref{ss.outline} gives an outline of the remainder of the paper, which provides technical definitions and proofs of key statements.


\subsection{Background and motivation}
\label{ss.bm}


\paragraph{Renormalisation.}

Coarse-graining of a model of \emph{interacting particle systems} is a way to aggregate information about the structure and the evolution of the system. With the help of \emph{renormalisation transformations} that sum out over variables on a microscopic scale while appropriately rescaling the model parameters, information is obtained about the system on a mesoscopic scale. Iteration of the transformation defines a \emph{renormalisation flow} in the space of parameters and ultimately leads to a description of the system on a macroscopic scale \cite{M,W}. 

Renormalisation plays an important role in the analysis of phase transitions and critical phenomena in equilibrium statistical physics \cite{PV}. Since parsimonious microscopic models of physical systems are ultimately based on \emph{effective} rather than \emph{fundamental} degrees of freedom, renormalisation theory aims to ensure that the macroscopic properties predicted by such models do not depend sensitively on microscopic details. In this setting, identifying possible \emph{attractors} of microscopic models under the renormalisation flow is crucial: microscopic differences between models that flow to the same attractor are macroscopically irrelevant. 

Attractors are typically identified as \emph{fixed points}, i.e., appropriately defined invariants, of the renormalisation transformation \cite{M,W,PV}. Under the action of this transformation, it is essential that the flow of parameters keeps tracks of the system within an appropriate class of target models. Generally, it is hard to work out the details, and consequently many arguments in the literature are approximative and mathematically non-rigorous. For instance, renormalisation can throw the system \emph{out} of the class of target models, while approximations presume that nonetheless the system remains close to this class in some sense \cite{vEFS}. Properly ensuring that fixed points also act as attractors along the renormalisation flow is therefore a mathematically delicate task.


\paragraph{Renormalisation of networks.}
 
Renormalisation of \emph{complex networks} \cite{GGPS}, especially in the presence of non-homogeneities and randomness, represents an open challenge and remains largely unexplored from a mathematical perspective. As for physical systems, in many empirical networks there is no unique natural resolution for the constituent vertices: for instance, individuals may be grouped into households, cities or countries, web pages into domains, neurons into regions of the brain, and firms into industries or sectors. How the representation of large-scale social, technological, biological and economic networks transforms under a change of resolution, and what microscopic network properties are irrelevant for the behaviour of the system at coarser scales, including the processes that run on it, is therefore an important and fascinating question.

Most of the current approaches to network renormalisation are concerned with the identification of meaningful iterative coarse-graining rules that, given a single input graph, produce an aggregated graph that is maximally informative about the original graph under a chosen criterion. For instance, \emph{geometric renormalisation} \cite{GPBS} proceeds by first finding an optimal embedding of the vertices of the input graph onto some metric space, and then aggregating groups of vertices that are found within a given mutual distance from one another. Edges between constituent vertices are aggregated onto edges between aggregated vertices, and the geometric embedding of the reduced graph is informative about the embedding of the original graph. By contrast, \emph{Laplacian renormalisation} \cite{VGCG} focusses on the Laplacian matrix of the original graph, which acts as the generator of a discrete diffusion process, and proceeds by averaging out the `fast eigenmodes', which leads to a reduced Laplacian matrix that mimics the same diffusion process as if it were running on an aggregated network. In \cite{ACGM}, a Laplacian-based renormalisation scheme is introduced that is similar in spirit and uses Markov intertwining dualities to construct multi-resolution schemes and wavelet transforms on finite undirected graphs.


\paragraph{Random graph models.}

A different approach, which is our main interest here, focuses on how \emph{random graph models}, along with their underlying parameters, transform under network aggregation. This approach is known as \emph{ensemble} or \emph{multiscale} network renormalisation \cite{GLG}. The basic idea is that, given a random graph model on an initial set of graphs, together with a deterministic aggregation rule projecting each graph in the initial set onto a reduced graph in some output set, an aggregated random graph model on the output set is induced. In general, the aggregated graph may have a difficult law, with parameters that are complicated functions of the parameters of the original law. Multiscale network renormalisation looks for \emph{invariant random graph ensembles} under a given aggregation rule, i.e., random graphs that remain governed by the original law of the graph, only up to a rescaling of its defining parameters. Such invariant ensembles represent the graph analogue of stable random variables.

If restricted to the class of random graphs with independent edges, the search for an invariant ensemble under a given coarse-graining rule is relatively straightforward and leads to an inhomogeneous random graph model where vertices are assigned positive real weights and a connection between two vertices is established with a probability that is a specific function of the weights of the two vertices \cite{GLG}. Under a greedy coarse-graining procedure, whereby an aggregate vertex is assigned a weight equal to the sum of the weights of the constituent vertices while two aggregate vertices are connected whenever at least one microscopic connection exists between their constituents, the invariant probability of \emph{disconnection} between two vertices can be identified as a decreasing exponential of the product of the weights of the two vertices \cite{GLG} (in the case of directed graphs, appropriate generalisations can be made \cite{LG}). This specific connection function has been considered previously in the mathematical literature \cite{Aldous1997,NorrosReittu2006,Bollobas2007,vdHofstad2017,CaronFox2017}, albeit with different purposes. 
The identification of this connection function with an invariant under a renormalisation transformation opens up new avenues.


\paragraph{Flow of the law of the weights.}

While most of the prior research focussed on the case where the vertex weights are either deterministic variables (representing e.g.\ some empirically observable amount of activity, mass, fitness, population size or capacity) or random variables drawn from convenient probability distributions, the network renormalisation perspective brings in the notion of the \emph{flow of the law of the weights} \cite{GLG}. In particular, just like the connection probability of an inhomogeneous random graph model transforms under coarse-graining, also the law of the weights transforms, and is given by the convolution of the laws of the weights to produce the weights of aggregated vertices. The identification of a \emph{joint fixed point}, i.e., the combination of an invariant connection function along with an invariant law of the weights, therefore becomes relevant. If vertices are aggregated into groups of equal size, then an invariant law is necessarily either a delta-like distribution (with vertices having equal weights at any aggregation level) or a one-sided stable-law distribution (with vertices having weights distributed with asymptotic power-law decay).

The \emph{former} fixed point corresponds to the homogeneous Erd\H{o}s--R\'enyi random graph where, irrespectively of the form of the connection function, the value of the connection probability is the same for all pairs of vertices (the value itself flowing appropriately under renormalisation). Instead, the \emph{latter} fixed point corresponds, in the regime of infinite-mean fitness, to an extremely inhomogeneous invariant model, also called the Multi Scale Model (MSM) \cite{GLG}, which is different from any other inhomogeneous random graph model with independent edges and finite-mean weights \cite{vdHofstad2017}. In particular, previous mathematical investigations of the MSM have highlighted that, under an appropriate scaling of a global parameter tuning the overall edge density, the degree distribution has a universal asymptotic inverse-square decay \cite{AGHL}, the local clustering coefficient remains finite even in the sparse regime where the edge density vanishes \cite{CvdHG}, the fraction of isolated vertices and other non-self-averaging network properties converge to random variables \cite{CvdHG}, and the spectral properties of the adjacency matrix are quite exotic, with multiple largest eigenvalues scaling like the square root of the number of vertices and the corresponding eigenvector entries being characterised by log-periodicity and complex scaling exponents \cite{CHG}.


\paragraph{Attractors.}

Motivated by the above results, in the present paper we address for the first time the natural question whether the aforementioned invariant graph ensembles are also attractors of the renormalisation flow for generic random graph models. In particular, we ask whether, starting from arbitrary inhomogeneous random graphs with independent edges, random weights, and weight-dependent connection probabilities, the iteration of the greedy aggregation rule progressively transforms the original model into one of the known fixed points. Note that the coarse-grained network records the existence of a connection between any two aggregate vertices, not the identity of the microscopic connections between the constituent vertices themselves. It therefore naturally represents the original system at a lower resolution level. As illustrated in the remainder of this section and proven rigorously in the subsequent sections, we establish conditions (referred to as light-tailed and heavy-tailed regimes, respectively) under which a random graph model is attracted either to the Erd\H{o}s--R\'enyi random graph or to the invariant model with one-sided stable-law weights and exponential disconnection probability. Although we are able to only partially identify the basins of attraction of the attractors, our results highlight that the iteration of a fundamental coarse-graining procedure (representing a change in the resolution at which random graphs are observed) can naturally bring the network close to one of two very different models, characterised by full homogeneity or extreme heterogeneity, respectively.


\subsection{Model}
\label{ss.model}

The object of interest in this paper is $G=(V,E)$, the \emph{inhomogeneous random graph} with vertex set $V = [n] = \{1,\ldots,n\}$ and edge set $E \subset [n] \times [n]$ generated as follows \cite{GLG}, \cite{GGPS}. Let $X=\{X_i\}_{i \in [n]}$ be i.i.d.\ weights taking values in $\R_+ = [0,\infty)$ drawn from a common probability distribution. Choose functions
\[
f^{\neq} \colon\,\R_+ \times \R_+ \to [0,1),\qquad f^{=}\colon\,\R_+ \to [0,1),
\] 
called \emph{connection functions}. Connect the unordered pair of vertices $\{i,j\}$, $i \neq j$, with probability $f^{\neq}_{ij}[X] = f^{\neq}(X_i,X_j)$ independently for different pairs of vertices, and connect the vertex $i$ with itself with probability $f^{=}_{i}[X] = f^{=}(X_i)$ independently for different vertices. Note that we allow for single edges (when $i \neq j$) and single self-loops (when $i=j$). We do \emph{not allow for multiple edges or multiple self-loops}. Because edges are undirected, we require $f^{\neq}$ to be symmetric. 

Note that $G$ has \emph{two types of randomness}, one coming from $X$ associated with the vertices and one coming from the edges drawn according to $f^{\neq}, f^{=}$ \emph{conditional} on $X$. Sometimes we write $G[X]$ to exhibit that $G$ depends on $X$. Throughout the sequel, $\EE_X$ denotes expectation with respect to $X$.


\subsection{Renormalisation}
\label{ss.map}

Suppose that $n=m^L$ for some $m\in\N\setminus\{1\}$ and $L \in \N$. Partition the $n$ vertices in $G$ into disjoint groups of $m$ vertices each, and \emph{aggregate} the vertices in each group into a single vertex, referred to as a level-$1$ vertex. Construct an \emph{aggregated graph} $G^{(1)}$ consisting of $n_1\equiv n/m$ level-$1$ vertices, labelled by $[n/m]$, by drawing an aggregated edge between two level-$1$ vertices $I^{(1)}$ and $J^{(1)}$ \emph{if and only if} there is a pair of vertices $i \in I^{(1)}$ and $j \in J^{(1)}$ in $G$ that is connected by an edge in $G$. For $I^{(1)}=J^{(1)}$ we allow $i=j$, so that an aggregated self-loop at $I^{(1)}$ is drawn \emph{if and only if} either an edge between a pair of distinct constituent vertices $i \ne j$ in $I^{(1)}$ is present or a self-loop at any constituent vertex $i \in I^{(1)}$ is present (= `greedy connection algorithm'). Note that the greedy connection rule has a simple interpretation: two aggregate vertices are connected whenever at least one microscopic connection exists between their constituents. Thus, the coarse-grained edge records the existence of a connection between two aggregate vertices, but not the identity of the microscopic connections.


\paragraph{Renormalisation transformation.}

We view $G^{(1)}$ as a \emph{level-$1$ renormalisation} of $G$, which is again an inhomogeneous random graph, on $n_1=n/m$ level-$1$ vertices. Clearly, in $G^{(1)}$ the pair of level-1 vertices $\{I^{(1)},J^{(1)}\}$ with $I^{(1)} \neq J^{(1)}$ is connected with probability 
\[
f^{\neq}_{I^{(1)},J^{(1)}}[X] = 1- \prod_{i \in I^{(1)},\,j \in J^{(1)}} \big(1-f^{\neq}(X_i,X_j)\big),
\]
while the level-1 vertex $I^{(1)}$ is connected to itself by a single self-loop with probability
\[
f^{=}_{I^{(1)},I^{(1)}}[X] = 1 - \prod_{ {i,j \in I^{(1)}} \atop {i<j} } \big(1-f^{\neq}(X_i,X_j)\big)
\prod_{ i \in I^{(1)}} \big(1-f^{=}(X_i)\big),
\] 
where the restriction $i<j$ is inserted in order to avoid double counting of the internal pairs. (Ordering of the vertices in $I^{(1)}$ can be done arbitrarily because of exchangeability in the product.) Clearly, in $G^{(1)}$ different pairs of level-$1$ vertices are connected independently. Note that $G^{(1)}$ is a \emph{deterministic} function of $G$, obtained via the greedy connection algorithm. All the randomness (weights and connections) sits in $G$ \emph{alone}.

It is convenient to write
\[
f^{\neq}(x,y) = 1-\ee^{-g^{\neq}(x,y)}, \qquad f^{=}(x) = 1-\ee^{-\tfrac12[g^{\neq}(x,x)+g^{=}(x)]},
\] 
with $g^{\neq}\colon\,\R_+ \times \R_+ \to \R_+$ and $g^{=}\colon\,\R_+ \to \R_+$. With this change of variables the above relations can be \emph{rewritten in the compact form}  
\[
g^{\neq}_{I^{(1)},J^{(1)}}[X] = \sum_{i \in I^{(1)},\,j \in J^{(1)}} g^{\neq}(X_i,X_j), \quad I^{(1)} \neq J^{(1)},
\qquad g^{=}_{I^{(1)}}[X] = \sum_{i \in I^{(1)}} g^{=}(X_i).
\]
The latter shows that $g^{\neq}$ and $g^=$ renormalise \emph{autonomously}, which is an important simplification. 

Note that the renormalisation transformation is such that, even when we start from connection functions that depend on the weights of the vertices only, i.e., $g^{\neq}_{ij}[X] = g^{\neq}(X_i,X_j)$ and $g^=_i[X] = g^=(X_i)$ for level-$0$ vertices $i,j$, this property \emph{does not carry over} to the aggregated connection functions. Indeed, $g^{\neq}_{I^{(1)},J^{(1)}}[X]$ and $g^{=}_{I^{(1)}}[X]$ in general depend on the two sets $X_{I^{(1)}} \equiv \{X_i\}_{i\in I^{(1)}}$ and $X_{J^{(1)}} \equiv \{X_j\}_{j\in J^{(1)}}$ of weights of the constituent vertices of the aggregated vertices $I^{(1)}$ and $J^{(1)}$. Thus, the transformation \emph{does not preserve} the class of connection functions that depend on the scalar weights only, a situation that is \emph{typical for renormalisation schemes}. One of our goals is to find out whether or not particular connection functions exist that remain inside the class, and whether these act as attractors for the iterated transformation. We will see that the answer is yes.

The following was proved in \cite{GLG} in a simpler case ($\eta=0$) and, for completeness, will be reproved in Section~\ref{ss.invlem} in full generality.

\begin{lemma}{\bf [Invariance property for the connection probability]}
\label{leminv}
For every $\delta \in (0,\infty)$ and $\eta \in [0,\infty)$, the choice 
\[
g^{\neq}_\delta(x,y) = \delta xy, \qquad g^{=}_\eta(x) = \eta x,
 \]
is invariant under addition of the weights in the aggregation. In other words, the renormalised graph has the same connection functions as the original graph, provided the weight of an aggregated vertex equals the sum of the weights of the constituent vertices, i.e., the aggregated vertex $I^{(1)}$ has weight $\bar{X}_{I^{(1)}} = \sum_{i \in I^{(1)}} X_i$. 
\end{lemma}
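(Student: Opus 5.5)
The plan is to verify the claim directly from the compact additive form of the renormalisation map derived above. Under the change of variables $f^{\neq} = 1-\ee^{-g^{\neq}}$, $f^{=} = 1-\ee^{-\frac12[g^{\neq}(x,x)+g^{=}(x)]}$, the aggregated connection functions are
\[
g^{\neq}_{I^{(1)},J^{(1)}}[X] = \sum_{i \in I^{(1)},\,j \in J^{(1)}} g^{\neq}(X_i,X_j), \qquad g^{=}_{I^{(1)}}[X] = \sum_{i \in I^{(1)}} g^{=}(X_i).
\]
It therefore suffices to show that, after inserting $g^{\neq}_\delta$ and $g^{=}_\eta$, these sums equal $g^{\neq}_\delta(\bar X_{I^{(1)}},\bar X_{J^{(1)}})$ and $g^{=}_\eta(\bar X_{I^{(1)}})$. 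For the self-loop we must check the full expression $\frac12[g^{\neq}(\bar X_I,\bar X_I)+g^{=}(\bar X_I)]$, since this is the quantity that determines the self-loop probability.

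\textbf{Step 1 (distinct groups).} For $I^{(1)}\neq J^{(1)}$, bilinearity gives
\[
\sum_{i\in I^{(1)},\,j\in J^{(1)}} \delta X_iX_j = \delta\Big(\sum_{i\in I^{(1)}}X_i\Big)\Big(\sum_{j\in J^{(1)}}X_j\Big) = \delta\,\bar X_{I^{(1)}}\bar X_{J^{(1)}}.
\]
Hence $f^{\neq}_{I^{(1)},J^{(1)}} = 1-\ee^{-\delta \bar X_{I^{(1)}}\bar X_{J^{(1)}}}$, which is the same connection function evaluated at the aggregated weights.

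\textbf{Step 2 (self-loops).} This is the only step needing care, because of the bookkeeping of diagonal versus off-diagonal terms. I will recompute the exponent directly from the product formula. The probability of no self-loop at $I^{(1)}$ is
\[
\prod_{i<j}\ee^{-\delta X_iX_j}\prod_i \ee^{-\frac12(\delta X_i^2+\eta X_i)} = \exp\Big(-\tfrac12\Big[\delta\sum_{i\neq j}X_iX_j + \delta\sum_i X_i^2 + \eta\sum_i X_i\Big]\Big),
\]
where all indices run over $I^{(1)}$. The off-diagonal and diagonal quadratic terms combine into $\delta(\sum_i X_i)^2 = \delta\bar X_{I^{(1)}}^2$. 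The exponent is therefore $\frac12[\delta \bar X_{I^{(1)}}^2+\eta\bar X_{I^{(1)}}]=\frac12[g^{\neq}_\delta(\bar X,\bar X)+g^{=}_\eta(\bar X)]$, which is exactly the original self-loop form at the aggregated weight.

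This is also why the factor $\tfrac12$ and the term $g^{\neq}(x,x)$ appear in the parametrisation of $f^{=}$: they complete the square. It is consistent with the autonomous rule $g^{=}_{I^{(1)}}=\sum_i g^{=}(X_i)=\eta\bar X_{I^{(1)}}$.

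\textbf{Step 3 (conclusion).} Edges between distinct level-1 vertices remain independent, and each depends on the constituent weights only through the sums $\bar X$. So $G^{(1)}$ is the inhomogeneous random graph with weights $\bar X_{I^{(1)}}$ and the same $(\delta,\eta)$. The argument holds for every $\delta\in(0,\infty)$ and $\eta\in[0,\infty)$, and by induction it extends to all levels.
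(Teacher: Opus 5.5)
Your proof is correct and follows the paper's argument: bilinearity $\sum_{i\in I^{(1)},\,j\in J^{(1)}}\delta X_iX_j=\delta\bar X_{I^{(1)}}\bar X_{J^{(1)}}$ for distinct groups, and for self-loops the square-completion identity $\sum_{i\neq j}X_iX_j+\sum_i X_i^2=\bar X_{I^{(1)}}^2$ together with linearity of $\eta x$. Your Step~2 obtains the full self-loop exponent $\tfrac12[\delta\bar X_{I^{(1)}}^2+\eta\bar X_{I^{(1)}}]$ in one computation from the product formula, which tracks the factor $\tfrac12$ more carefully than the paper; the paper instead treats the case $\eta=0$ first and then the $\eta$-part separately.
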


\noindent
In Section~\ref{ss.invlem} we will also check the following invariance property.

\begin{lemma}{\bf [Invariance property for positive stable weights]}
\label{lemstable}
Let $\gamma\in(0,1)$, and let $Y_1,\ldots,Y_m$ be i.i.d.\ standard positive $\gamma$-stable random variables, with Laplace transform
\[
\EE[\ee^{-\lambda Y_1}] = \ee^{-\lambda^\gamma}, \qquad \lambda\geq 0.
\]
Then
\[ 
m^{-1/\gamma}\sum_{i=1}^mY_i \stackrel{d}{=}Y_1.
\]
Consequently, for every $\delta\in(0,\infty)$ and $\eta\in[0,\infty)$, the inhomogeneous random graph with connection functions
\[
g^{\neq}_\delta(x,y)=\delta xy, \qquad g^{=}_\eta(x)=\eta x,
\]
and i.i.d.\ standard positive $\gamma$-stable weights is invariant in distribution under the renormalisation transformation, provided the aggregated weights are rescaled by $m^{-1/\gamma}$.
\end{lemma}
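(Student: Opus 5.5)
The proof has two parts: a Laplace transform computation for the stable law, and an invocation of Lemma~\ref{leminv} with a rescaled parameter.

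\textbf{Stability of the law.} By independence, for every $\lambda \geq 0$,
\[
\EE\Big[\ee^{-\lambda m^{-1/\gamma}\sum_{i=1}^m Y_i}\Big]
= \prod_{i=1}^m \EE\big[\ee^{-\lambda m^{-1/\gamma} Y_i}\big]
= \big(\ee^{-\lambda^\gamma m^{-1}}\big)^m
= \ee^{-\lambda^\gamma}.
\]
The Laplace transform on $\R_+$ determines the law of a nonnegative random variable, so $m^{-1/\gamma}\sum_{i=1}^m Y_i \stackrel{d}{=} Y_1$.

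\textbf{Effect on the graph.} Let $G$ have i.i.d.\ standard $\gamma$-stable weights $X$ and connection functions $g^{\neq}_\delta, g^{=}_\eta$. By the compact renormalisation formulas and Lemma~\ref{leminv}, conditional on $X$, the level-$1$ graph has independent edges. Its parameters are
\[
g^{\neq}_{I,J} = \delta \bar X_I \bar X_J, \qquad g^{=}_I = \eta \bar X_I,
\]
where $\bar X_I = \sum_{i\in I} X_i$.

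Set $\tilde X_I = m^{-1/\gamma}\bar X_I$. Since the groups $I$ are disjoint blocks of i.i.d.\ variables, the $\tilde X_I$ are i.i.d. By the first part, each $\tilde X_I$ has the standard $\gamma$-stable law. In terms of the rescaled weights,
\[
g^{\neq}_{I,J} = \delta m^{2/\gamma}\,\tilde X_I \tilde X_J, \qquad g^{=}_I = \eta m^{1/\gamma}\,\tilde X_I.
\]

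Each aggregated edge indicator is a function of the edges of $G$ between disjoint sets of vertex pairs. Hence, conditional on $X$, aggregated edges are independent, and their conditional probabilities depend on $X$ only through $\tilde X$. So, conditional on $\tilde X$, the aggregated edges are independent with the probabilities above. Combined with the i.i.d.\ stable law of $\tilde X$, this identifies the joint law of $(\tilde X, G^{(1)})$ as that of the original model on $n/m$ vertices with parameters $(\delta m^{2/\gamma}, \eta m^{1/\gamma})$.

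\textbf{Where care is needed.} The model class with connection functions $(g^{\neq}_\cdot, g^{=}_\cdot)$ and i.i.d.\ standard $\gamma$-stable weights is exactly preserved; only the parameters $(\delta,\eta)$ flow, by the factors $(m^{2/\gamma}, m^{1/\gamma})$. The phrase ``invariant in distribution'' should be read in this sense. If $\delta$ is to be literally fixed, it has to be renormalised, e.g.\ $\delta \mapsto \delta m^{-2/\gamma}$ (and $\eta \mapsto \eta m^{-1/\gamma}$) at each step, as the later scalings of the paper presumably do.

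The one step needing care is the conditioning argument. The aggregated graph, conditional on the original weights $X$, must have the same law as when conditioning only on the aggregated weights $\tilde X$. This holds because the conditional edge probabilities depend on $X$ only through the sums $\bar X_I$. Everything else is the one-line Laplace transform identity above.
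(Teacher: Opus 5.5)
Your proof is correct and takes the same route as the paper. The paper uses the same Laplace-transform identity $\EE[\ee^{-\lambda m^{-1/\gamma}\sum_i Y_i}]=\ee^{-\lambda^\gamma}$ and then appeals to Lemma~\ref{leminv} for the graph-level statement. Your added care is also right. The conditioning on $\tilde X$, and the observation that with weights rescaled by $m^{-1/\gamma}$ the parameters flow as $(\delta,\eta)\mapsto(\delta m^{2/\gamma},\eta m^{1/\gamma})$, make explicit what the paper's phrase ``keep the same form'' leaves implicit. This parameter flow is exactly what the $(m^{2\ell})^{-1/\gamma}$, $(m^{\ell})^{-1/\gamma}$ dilution in Theorem~\ref{thm2} compensates for.
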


The restriction $\gamma\in(0,1)$ is natural because we work with positive one-sided stable weights. Stable laws with index $\gamma \in [1,2)$ do not give non-degenerate positive one-sided weights of this form.

\begin{lemma}{\bf [Invariance property for deterministic weights]}
\label{lemER}
For every $\delta\in(0,\infty)$, $\eta\in[0,\infty)$ and $w\in(0,\infty)$, the deterministic weight law concentrated at $w$ is invariant under averaging of weights: if $W_1,\ldots,W_m$ are deterministic random variables equal to $w$, then 
\[
m^{-1} \sum_{i=1}^m W_i \stackrel{d}{=}W_1,
\]
where in this case equivalence in distribution coincides with equality in value. Combined with the invariant connection functions $g^{\neq}_\delta(x,y) = \delta xy$ and $g^{=}_\eta(x,x) = \eta x$ identified in Lemma~\ref{leminv}, this gives a two-parameter homogeneous Erd\H{o}s--R\'enyi random graph with edge and self-loop  connection probability equal to
\[
p^{\neq} = 1-\ee^{-\delta w^2}, \qquad p^{=} = 1-\ee^{-\frac12(\delta w^2+\eta w)}.
\]
The above random graph is invariant in distribution under the renormalisation transformation, provided the aggregated weights are rescaled by $m^{-1}$.
\end{lemma}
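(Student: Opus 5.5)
The plan has three steps: check the weight identity, compute the connection probabilities, and then do the invariance check through the sum-rule for $g$ and Lemma~\ref{leminv}.

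\textbf{Weight identity.} Since $W_i = w$ for all $i$, we get $m^{-1}\sum_{i=1}^m W_i = m^{-1}\cdot m w = w = W_1$. For deterministic variables, equality in distribution is the same as equality of the constant values, so this step is immediate.

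\textbf{Connection probabilities.} Substitute $x=y=w$ into $f^{\neq}(x,y)=1-\ee^{-g^{\neq}(x,y)}$ and $f^{=}(x)=1-\ee^{-\frac12[g^{\neq}(x,x)+g^{=}(x)]}$ with $g^{\neq}_\delta(x,y)=\delta xy$ and $g^{=}_\eta(x)=\eta x$. This gives
\[
p^{\neq}=1-\ee^{-\delta w^2}, \qquad p^{=}=1-\ee^{-\frac12(\delta w^2+\eta w)}.
\]
These do not depend on the pair of vertices. Edges are independent given $X$, and $X$ is deterministic, so edges are unconditionally independent. Hence $G$ is a homogeneous Erd\H{o}s--R\'enyi graph with self-loops and the two parameters $p^{\neq}$ and $p^{=}$.

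\textbf{Invariance under renormalisation.} Use the compact sum-rule from Section~\ref{ss.map}. For distinct level-1 vertices,
\[
g^{\neq}_{I^{(1)},J^{(1)}}=\sum_{i\in I^{(1)},\,j\in J^{(1)}}\delta w^2=\delta m^2w^2=\delta(mw)^2 .
\]
For a single level-1 vertex, $g^{=}_{I^{(1)}}=\eta m w$. So by Lemma~\ref{leminv}, the renormalised graph is the same inhomogeneous graph with connection functions $g^{\neq}_\delta, g^{=}_\eta$ and aggregated weights $\bar X_{I^{(1)}}=mw$.

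The self-loop needs one extra check. The aggregated self-loop collects the $\binom m2$ internal edges and the $m$ internal self-loops, so its total exponent is
\[
\binom m2 \delta w^2+\tfrac m2(\delta w^2+\eta w)=\tfrac12\bigl(\delta (mw)^2+\eta mw\bigr).
\]
This matches the original form $\tfrac12[g^{\neq}(x,x)+g^{=}(x)]$ at $x=mw$, which is exactly the content of Lemma~\ref{leminv}.

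\textbf{Main obstacle: the rescaling.} Rescaling the weights by $m^{-1}$ restores $mw$ to $w$, so the law of the weights is invariant. The connection functions are bilinear, however, so evaluating $g^{\neq}_\delta$ at $w$ instead of $mw$ changes $\delta(mw)^2$ to $\delta w^2$. To keep the probabilities themselves fixed, the rescaling of the weights must be compensated by rescaling the parameters, $\delta\mapsto m^2\delta$ and $\eta\mapsto m\eta$. Equivalently, the renormalised graph stays within the same two-parameter Erd\H{o}s--R\'enyi family, which is what invariance in distribution means here. The one point I would state explicitly is that invariance is understood as preservation of the family with parameters flowing, not as exact fixedness of $(\delta,\eta)$, consistent with the flow of parameters described in Section~\ref{ss.bm}. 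Everything else is direct substitution.
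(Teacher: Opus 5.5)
Your proposal is correct and follows essentially the same route as the paper: the weight identity $m^{-1}\cdot mw=w$, evaluation of $g^{\neq}_\delta$ and $g^{=}_\eta$ at weight $w$ to obtain $p^{\neq}$ and $p^{=}$, and the conclusion that the renormalised graph stays in the two-parameter Erd\H{o}s--R\'enyi family. Your explicit aggregation check, including the self-loop exponent $\tfrac12(\delta(mw)^2+\eta mw)$, and your remark that after rescaling the weights back to $w$ the aggregated graph has parameters $(m^2\delta,m\eta)$ rather than $(\delta,\eta)$, make precise a point the paper's proof leaves implicit when it concludes only that the graph ``belongs to the two-parameter family''.
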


\noindent
Thus, we have two different \emph{two-parameter families} of connection functions and weights that are invariant under the renormalisation map when weights are \emph{added} and \emph{rescaled}. We will show that each acts as an \emph{attractor} for iterations of the renormalisation map under appropriate scaling of the weights. As illustrated by the different expressions for the transformations of the random variables $Y_i$ and $W_i$ considered in Lemma~\ref{lemstable} and~\ref{lemER}, respectively, the required scaling of the weights is different for the two attractors.


\paragraph{Iteration of the renormalisation transformation.}

The renormalisation can be iterated, leading to a sequence of random graphs $\{G^{(\ell)}[X]\}_{0 \leq \ell \leq L}$ in which vertices get more and more aggregated, at each step into groups of equal size $m$. The number of vertices at level $\ell$ is therefore $n_\ell = n/m^{\ell}$. Note that $G^{(\ell)}$ is a \emph{deterministic} function of $G$, obtained by applying the greedy connection algorithm $\ell$ times. Again, all the randomness (weights and connections) sits in $G$ \emph{alone}.

Note that the iteration preserves the dependence of the aggregated connection functions on the full set of microscopic weights inside each aggregate vertex. Indeed, in $G^{(\ell)}$ the pair of level-$\ell$ vertices $\{I^{(\ell)},J^{(\ell)}\}$ with $I^{(\ell)} \neq J^{(\ell)}$ is connected with probability 
\[
f^{\neq}_{I^{(\ell)},J^{(\ell)}}[X] = 1- \prod_{i \in I^{(\ell)},\,j \in J^{(\ell)}} \big(1-f^{\neq}(X_i,X_j)\big),
\]
while the level-$\ell$ vertex $I^{(\ell)}$ is connected to itself by a single self-loop with probability
\[
f^{=}_{I^{(\ell)},I^{(\ell)}}[X] = 1 - \prod_{ {i,j \in I^{(\ell)}} \atop {i<j} } \big(1-f^{\neq}(X_i,X_j)\big)
\prod_{ i \in I^{(\ell)}} \big(1-f^{=}(X_i)\big).
\] 
This is compactly written as
\[
g^{\neq}_{I^{(\ell)},J^{(\ell)}}[X] = \sum_{i \in I^{(\ell)},\,j \in J^{(\ell)}} g(X_i,X_j),
\qquad g^{=}_{I^{(\ell)}}[X] = \sum_{i \in I^{(\ell)}} g^{=}(X_i).
\]

Since $n=m^L$, a total of $L$ iterations is possible, after which all vertices have been aggregated into a \emph{single vertex} at level $L$, i.e., $n_L=1$. At every level $\ell$ with $0 < \ell \leq L$ the connection probabilities a priori depend on \emph{all} the weights whose labels fall in the aggregated vertex sets, not just on the \emph{sum} of these weights. Only for the special case where $g = g_\delta$ does this reduction occur. Nonetheless, we may expect that after many iterations some sort of \emph{homogenisation} takes place, such that this reduction occurs at least \emph{approximately}, in the spirit of abstract renormalisation theory. The goal of the current paper is to identify choices of the connection functions for which such a simplification occurs after appropriate scaling. We will see that light-tailed regimes and heavy-tailed regimes lead to different scalings and different scaling limits.

Although we might loosen the restriction of the renormalisation being done in groups of vertices of equal size, this setup is easy and maintains exchangeability. When passing to the limit $\ell\to\infty$, we need that also $L\to\infty$. In order to achieve this, we may fix an infinite sequence of weights $\{X_i\}_{i \in \N}$ beforehand and embed $\{X_i\}_{i \in [n]}$ into this sequence for every $n$. We define $N_{L-\ell} = n_\ell = m^{L-\ell}$ and think of the setting where $N_{L-\ell} \gg 1$, so that even after $\ell$ iterations the renormalised graph still consists of a large number of level-$\ell$ vertices. In what follows, $n$ does not play much of a role. We only need it to be large enough. Our focus will be on what the renormalised random graph looks like for \emph{large but finite} $\ell$, even though our main theorems are phrased with $\ell \to \infty$ to get sharp and universal limiting results. Throughout the paper $m$ is kept fixed.


\subsection{Main theorems}
\label{ss.theorems}

We apply the renormalisation transformation $\ell$ times and investigate what happens when $\ell \to \infty$. In order to end up with a non-degenerate graph limit, we need to start from a graph in which the edges and the loops are \emph{diluted} in an $\ell$-dependent way. Here and below, by dilution we mean that the microscopic (= level-0) connection intensities are allowed to depend on the number $\ell$ of renormalisation steps and tend to zero as $\ell\to\infty$. This should not be confused with sparsity in the sense of bounded average degree. Rather, for each target scale $\ell$ we start from a microscopic graph whose connection function is weakened by an $\ell$-dependent factor. More explicitly, if 
\[
f^{\neq}(x,y) = 1-\ee^{-g^{\neq}(x,y)}, \qquad 
f^{=}(x) = 1-\ee^{-\tfrac12 [g^{\neq}(x,x)+g^{=}(x)]},
\]
then the $\ell$-diluted version has rescaled connection functions 
\[
\mathfrak{f}_\ell^{\neq}(x,y) 
= 1-\ee^{-\mathfrak{g}_\ell^{\neq}(x,y)},
\qquad \mathfrak{f}_\ell^{=}(x) 
= 1-\ee^{-\tfrac12[\mathfrak{g}_\ell^{\neq}(x,x)+\mathfrak{g}_\ell^{=}(x)]},
\]
where
\[
\mathfrak{g}^{\neq}_\ell(x,y) = a^{\neq}_\ell g^{\neq}(x,y), \qquad 
\mathfrak{g}^{=}_\ell(x) = a^{=}_\ell g^{=}(x),
\]
for some appropriately chosen $a^{\neq}_\ell \downarrow 0$ and $a^{=}_\ell \downarrow 0$ as $\ell \to \infty$. For instance, for large $\ell$ and bounded $x,y$, we have
\[
\mathfrak{f}^{\neq}_\ell(x,y) \approx a^{\neq}_\ell g^{\neq}(x,y),
\]
and each microscopic (= level-$0$) edge is individually unlikely. The reason for this scaling is that a level-$\ell$ edge represents the union of $m^{2\ell}$ possible microscopic edges between two blocks of size $m^\ell$. Without weakening the microscopic connection probabilities, the greedy renormalisation would make the level-$\ell$ graph complete in the limit as $\ell\to\infty$. The dilution is therefore the \emph{scaling window} in which the renormalised graph has a non-trivial limit.


\paragraph{Light-tailed regime.}
Write $P(G^{(\ell)}[X])$ to denote the law of the \emph{inhomogeneous random graph} $G^{(\ell)}[X]$ that is obtained after carrying out $\ell$ renormalisations of the graph with \emph{rescaled connection functions} $\mathfrak{g}^{\neq}_\ell$ and $\mathfrak{g}^{=}_\ell$ given by
\[
\mathfrak{g}^{\neq}_\ell = (m^{2\ell})^{-1}\, g^{\neq}, \qquad  \mathfrak{g}^{=}_\ell = (m^{\ell})^{-1}\, g^{=},
\]
\emph{conditional} on the weights $X$ (= `quenched setting'). This law lives on the set of all graphs with $N_{L-\ell}$ vertices and no multiple edges or multiple loops. Write $P(G_{\ell,p})$ to denote the law of 
\begin{itemize}
\item
the \emph{homogeneous Erd\H{o}s--R\'enyi random graph} $G_{\ell,p}$ with $N_{L-\ell}$ vertices, edge probability $p^{\neq} \in (0,1)$, loop probability $p^{=} \in (0,1)$, and $p=(p^{\neq},p^{=})$. 
\end{itemize}

Let
\[
\Delta\Big(P(G^{(\ell)}[X]),P(G_{\ell,p})\Big)
= S^{\neq}\Big(P(G^{(\ell)}[X]) ~\Big|~ P(G_{\ell,p})\Big) + S^{=}\Big(P(G^{(\ell)}[X]) ~\Big|~ P(G_{\ell,p})\Big),
\]
where $S^{\neq}$ and $S^{=}$ are the specific relative entropies for edges and self-loops defined in Section~\ref{ss.relentr}.

\begin{theorem}{\bf [Averaged homogeneous attractor]}
\label{thm1}
Suppose that
\[
(\Box) \qquad
\begin{aligned}
&0 < \EE_X\big[g^{\neq}(X_1,X'_1)\big] < \infty,\\
&0 \leq \EE_X\big[g^{=}(X_1)\big] < \infty,\\
&k^{-2} \sum_{i=1}^k g^{\neq}(X_i,X_i)  \xrightarrow[k\to\infty]{\mathbb P} 0,
\end{aligned}
\]
where \(X'_1\) is an independent copy of \(X_1\) and $(X_i)_{i\in\N}$ are i.i.d.\ copies of $X_1$. Put
\[
q^{\neq} = \EE_X\big[g^{\neq}(X_1,X'_1)\big],
\qquad
q^{=}=\EE_X\big[g^{=}(X_1)\big],
\]
and
\[
p^{\neq} = 1-\ee^{-q^{\neq}},
\qquad
p^{=} = 1-\ee^{-\frac12(q^{\neq}+q^{=})}.
\]
Then
\[
\lim_{\ell \to \infty} \sup_{L \geq \ell}\,\EE_X\left[\Delta\Big(P(G^{(\ell)}[X]),P(G_{\ell,p})\Big)\right] = 0
\]
with \(p=(p^{\neq},p^{=})\).
\end{theorem}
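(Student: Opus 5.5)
Since the specific relative entropies $S^{\neq}$ and $S^{=}$ (defined in Section~\ref{ss.relentr}) are, as I expect, normalised averages over pairs and over vertices, and since conditional on $X$ all level-$\ell$ edges and loops are independent Bernoulli variables, $\Delta$ should reduce to averages of the Bernoulli relative entropy $h(a\,|\,b)=a\log\frac ab+(1-a)\log\frac{1-a}{1-b}$. Writing $f^{\neq}_{IJ}=1-\ee^{-Q_{IJ}}$ with
\[
Q_{IJ}=m^{-2\ell}\sum_{i\in I,\,j\in J} g^{\neq}(X_i,X_j),\qquad
R_I=\tfrac12\Big(m^{-2\ell}\sum_{i,j\in I} g^{\neq}(X_i,X_j)+m^{-\ell}\sum_{i\in I} g^{=}(X_i)\Big),
\]
the plan is to show $\EE_X[h(1-\ee^{-Q_{IJ}}\,|\,p^{\neq})]\to0$ and $\EE_X[h(1-\ee^{-R_I}\,|\,p^{=})]\to0$. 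By exchangeability of the i.i.d.\ weights, the law of $(Q_{IJ},R_I)$ depends only on $\ell$ and not on $L$ or on the particular blocks, so averaging over the $N_{L-\ell}$ vertices and their pairs and then taking $\EE_X$ gives exactly these single-pair and single-vertex expectations. This makes the $\sup_{L\ge\ell}$ automatic.

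\emph{Step 1: Law of large numbers.} $Q_{IJ}$ is a two-sample $U$-statistic-type average over independent blocks of size $k=m^\ell$, with kernel $g^{\neq}\in L^1$. I would show $Q_{IJ}\to q^{\neq}$ in $L^1$ via the $L^1$ strong law for two-sample $U$-statistics: truncate $g^{\neq}$ at level $M$, handle the bounded part by a variance computation of order $k^{-1}$, and control the remainder in $L^1$ uniformly by $\EE[g^{\neq}\mathbf 1\{g^{\neq}>M\}]$. For $R_I$, the off-diagonal part $m^{-2\ell}\sum_{i\ne j}$ converges to $q^{\neq}$ in $L^1$ in the same way, and $m^{-\ell}\sum_i g^=(X_i)\to q^=$ by the ordinary $L^1$ law of large numbers. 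The diagonal term $k^{-2}\sum_i g^{\neq}(X_i,X_i)$ tends to $0$ in probability by the third condition in $(\Box)$. Hence $R_I\to\frac12(q^{\neq}+q^=)$ in probability.

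\emph{Step 2: Continuity and integrability of the entropy.} The map $x\mapsto h(1-\ee^{-x}\,|\,1-\ee^{-q})$ is continuous on $[0,\infty)$, vanishes at $x=q$, and has linear growth: the term $(1-a)\log\frac{1-a}{1-b}$ with $1-a=\ee^{-x}$ equals $\ee^{-x}(q-x)$, which is bounded, and the first term is at most $\log\frac1b+$const. Since $q^{\neq}>0$ by $(\Box)$, $p^{\neq}$ and $p^{=}$ lie in $(0,1)$. So $h(\cdot\,|\,b)$ is bounded, and bounded convergence together with convergence in probability gives $\EE_X[h]\to0$. For the loops I only need convergence in probability, so the diagonal term, which may fail to be integrable, causes no trouble.

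\emph{Main obstacle.} The delicate point is the $U$-statistic law of large numbers in Step 1 under a bare first-moment assumption on $g^{\neq}(X_1,X_1')$, with no moment control on the diagonal. If the direct truncation argument is awkward, I would fall back on the Hoeffding decomposition together with a reverse-martingale argument along $k=m^\ell$. That argument gives a.s.\ and $L^1$ convergence of two-sample $U$-statistics under integrability of the kernel alone, and convergence in probability is all that is needed.
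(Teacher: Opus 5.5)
Your proof is correct. It shares the paper's overall architecture: exchangeability reduces $\EE_X[\Delta]$ to one representative pair and one representative block, whose laws do not depend on $L$; a law of large numbers handles the block-averaged intensities; the Bernoulli relative entropy is then controlled. The final step, however, differs in a way that matters.

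The paper bounds the entropy via Lemma~\ref{lem:entrbd1} by $\zeta(a,b)\le(\frac1b+1)|a-b|$, and therefore needs $L^1$ convergence of the rescaled intensities. You instead observe that $x\mapsto h(1-\ee^{-x}\mid 1-\ee^{-q})$ is bounded and continuous on $[0,\infty)$ and vanishes at $x=q$. Convergence in probability then suffices by bounded convergence.

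For the edges the two routes are equivalent. Your truncation-plus-variance argument supplies the $L^1$ two-sample law of large numbers that the paper only cites. The paper's route has the small advantage of an explicit bound in terms of $\EE_X|\mathfrak g^{\neq}_\ell-q^{\neq}|$.

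For the self-loops your route is the more robust one. The paper's estimate of $c^{=}_\ell$ implicitly requires $\EE_X\big[m^{-2\ell}\sum_{i\in I}g^{\neq}(X_i,X_i)\big]\to0$. Condition $(\Box)$ only gives convergence in probability of this diagonal term, and the term need not even be integrable. An example is the setting of Corollary~\ref{cor:product-light} with $\beta<\alpha<2\beta$, where $\EE_X[X_1^{2\beta}]=\infty$. Your bounded-convergence step closes this gap. Within the paper's framework one could equally note that $\zeta(\cdot,b)\le\frac1b+b$ is bounded and tends to $0$ as $a\to b$.

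Two cosmetic points. First, drop the phrase ``linear growth'' in Step~2: what you actually show, correctly, is boundedness, $0\le h\le\log\frac1b+q$. Second, the reverse-martingale fallback is unnecessary. Your truncation argument already works under $\EE_X[g^{\neq}(X_1,X_1')]<\infty$ alone, because every off-diagonal summand has the law of $g^{\neq}(X_1,X_1')$ and the diagonal is split off.
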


\begin{corollary}{\bf [Product kernel with finite first effective moment]}
\label{cor:product-light}
Suppose that, for some $B,\beta\in(0,\infty)$ and $C,K^{\neq},K^{=}<\infty\in[0,\infty)$,
\[
\big|g^{\neq}(x,y)-B(xy)^\beta\big|\leq K^{\neq}, \quad x,y\in\R_+,
\qquad \big|g^{=}(x)-Cx^\beta\big|\leq K^{=}, \quad x\in\R_+.
\]
Suppose further that
\[
0<\EE_X[X_1^\beta]<\infty.
\]
Then Theorem~\ref{thm1} applies. In particular, if
\[
\PP(X_1>x)\sim Ax^{-\alpha}, \qquad x\to\infty,
\]
then the product connection function is attracted to the homogeneous Erd\H{o}s--R\'enyi random graph whenever $\alpha>\beta$.
\end{corollary}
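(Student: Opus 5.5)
The corollary follows once we verify the three conditions in $(\Box)$ of Theorem~\ref{thm1}; no further structure is needed.

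\textbf{First condition.} The sandwich bound gives
\[
B(xy)^\beta-K^{\neq}\le g^{\neq}(x,y)\le B(xy)^\beta+K^{\neq}.
\]
Taking expectations over independent $X_1,X_1'$ and using independence,
\[
\EE_X[g^{\neq}(X_1,X'_1)]\le B\,(\EE_X[X_1^\beta])^2+K^{\neq}<\infty .
\]
Positivity is not implied by the sandwich bound when $K^{\neq}$ is large, since the lower bound may be negative. However, $g^{\neq}\ge0$ by definition, and $g^{\neq}(x,y)\ge B(xy)^\beta-K^{\neq}>0$ whenever $(xy)^\beta>K^{\neq}/B$. Since $\EE_X[X_1^\beta]>0$, the law of $X_1$ is not concentrated at $0$, but that alone does not force $X_1X_1'$ to be large with positive probability. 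So I will treat strict positivity as the intended content of the hypothesis. The natural reading is that $\EE_X[g^{\neq}(X_1,X_1')]>0$ holds, as in the model, or that $K^{\neq}=0$ recovers it directly from $\EE_X[X_1^\beta]>0$. If needed, one notes that $q^{\neq}=0$ would make $g^{\neq}=0$ almost surely; the corollary then degenerates to the trivial graph, and this is the only possible gap. Flagging this point is the main subtlety I expect.

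\textbf{Second condition.} Similarly,
\[
0\le\EE_X[g^{=}(X_1)]\le C\,\EE_X[X_1^\beta]+K^{=}<\infty .
\]

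\textbf{Third condition.} This is the substantive step. It requires
\[
k^{-2}\sum_{i=1}^k g^{\neq}(X_i,X_i)\to0 \quad\text{in probability}.
\]
Bound $g^{\neq}(X_i,X_i)\le BX_i^{2\beta}+K^{\neq}$. The constant part contributes $K^{\neq}/k\to0$. For the remaining part, set $Z_i=X_i^{2\beta}$. Then $Z_i^{1/2}=X_i^\beta$ is integrable, so $\EE[Z_1^{1/2}]<\infty$. By the Marcinkiewicz--Zygmund strong law with exponent $p=1/2\in(0,1)$, $k^{-2}\sum_{i\le k}Z_i\to0$ almost surely.

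An elementary alternative avoids citing that law. Write $\sum_{i\le k} Z_i\le \big(\sum_{i\le k} Z_i^{1/2}\big)^2$, which holds because the $Z_i^{1/2}$ are nonnegative. Hence
\[
k^{-2}\sum_{i\le k} Z_i\le\Big(k^{-1}\cdot k^{-1/2}\textstyle\sum_{i\le k} Z_i^{1/2}\Big)^2\cdot k .
\]
This is too lossy, so the Marcinkiewicz--Zygmund route is the one I would use. Alternatively, a truncation argument works directly. Since $Z_1^{1/2}$ is integrable, $\PP(\max_{i\le k}Z_i>\epsilon k^2)\le k\,\PP(X_1^\beta>\sqrt{\epsilon}\,k)\to0$. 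On the complementary event, the truncated sum has expectation at most $k\,\EE[Z_1\mathbf 1\{Z_1\le\epsilon k^2\}]=o(k^2)$, because $\EE[Z\mathbf 1\{Z\le t\}]=o(t^{1/2})$ when $\EE Z^{1/2}<\infty$. Markov's inequality then completes the argument.

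\textbf{Tail statement.} If $\PP(X_1>x)\sim Ax^{-\alpha}$, then $\PP(X_1^\beta>t)\sim At^{-\alpha/\beta}$. This tail is integrable exactly when $\alpha/\beta>1$, so $\alpha>\beta$ gives $\EE_X[X_1^\beta]<\infty$, and positivity holds because $X_1$ is nondegenerate. Hence Theorem~\ref{thm1} applies, and the graph is attracted to the Erd\H{o}s--R\'enyi random graph.
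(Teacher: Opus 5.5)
Your proof is correct. On the first two conditions of $(\Box)$ and on the tail statement it coincides with the paper's proof. The difference is in the diagonal condition.

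The paper takes $W_i=X_i^\beta$ and bounds
$m^{-2\ell}\sum_{i\in I^{(\ell)}}W_i^2\le\big(m^{-\ell}\max_i W_i\big)\big(m^{-\ell}\sum_i W_i\big)$.
It then uses the law of large numbers for the second factor and $k\,\PP(W_1>\varepsilon k)\to0$ for the first. This is exactly the repair of the bound you discarded as too lossy: instead of $\sum_i Z_i\le\big(\sum_i Z_i^{1/2}\big)^2$, keep one factor $\sum_i W_i$ and replace the other by $\max_i W_i$.

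Your two routes compare to this as follows.
\begin{itemize}
\item The Marcinkiewicz--Zygmund route gives more, namely almost sure rather than in-probability convergence, but it rests on a cited theorem.
\item The truncation route is self-contained. It uses the same tail fact $t\,\PP(X_1^\beta>t)\to0$ as the paper, and replaces the law of large numbers by the estimate $\EE[Z\mathbf{1}\{Z\le t\}]=o(t^{1/2})$ together with Markov's inequality.
\end{itemize}
The paper's product bound is the shortest of the three. In a final write-up, drop the discarded computation.

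Your concern about $q^{\neq}>0$ is legitimate, and the paper's proof does not address it either. It checks only finiteness and then declares the first two conditions of $(\Box)$ satisfied. For instance, take $X_1\le1$ a.s., $K^{\neq}\ge B$ and $g^{\neq}(x,y)=\big(B(xy)^\beta-K^{\neq}\big)_+$. This satisfies all the hypotheses of the corollary but gives $q^{\neq}=0$.

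For the `in particular' clause you can close the gap outright. With $A>0$ the weight $X_1$ is unbounded, so $\PP\big(B(X_1X_1')^\beta>K^{\neq}\big)>0$, and on that event $g^{\neq}(X_1,X_1')>0$. This unboundedness is what yields $q^{\neq}>0$. Your stated reason, that $X_1$ is nondegenerate, only gives $\EE_X[X_1^\beta]>0$.

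Your side remark that $q^{\neq}=0$ makes things degenerate to a trivial graph is not accurate:
\begin{itemize}
\item It forces $g^{\neq}(X_i,X_j)=0$ a.s.\ only for $i\neq j$.
\item The target $p^{\neq}=0$ then falls outside the Erd\H{o}s--R\'enyi family of Theorem~\ref{thm1}.
\item When also $q^{=}=0$, the diagonal terms $g^{\neq}(X_i,X_i)$ can give the level-$\ell$ loops positive probability against a target of $0$, which makes $S^{=}$ infinite.
\end{itemize}
So the general statement needs an extra condition such as $\PP\big(B(X_1X_1')^\beta>K^{\neq}\big)>0$. This holds automatically if $K^{\neq}=0$ or if $X_1$ is unbounded.
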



\paragraph{Heavy-tailed regime.}

Fix $\gamma \in (0,1)$. Write $P(G^{(\ell)})$ to denote the law of the random graph $G^{(\ell)}$ that is obtained after carrying out $\ell$ renormalisations of the graph with \emph{rescaled connection functions} $\mathfrak{g}^{\neq}_\ell$ and $\mathfrak{g}^{=}_\ell$ given by
\[
\mathfrak{g}^{\neq}_\ell = (m^{2\ell})^{-1/\gamma} g^{\neq},
\qquad \mathfrak{g}^{=}_\ell = (m^{\ell})^{-1/\gamma}\,g^{=},
\]  
\emph{unconditional} on the weights $X$ (= `annealed setting'). Write $P(G_{\ell,\gamma,\delta,\eta})$ to denote the law of 
\begin{itemize}
\item
the \emph{inhomogeneous stable law random graph} $G_{\ell,\gamma,\delta,\eta}$ with $N_{L-\ell}$ vertices, with weights given by i.i.d.\ standard $\gamma$-stable law random variables $\{Y_I\}_{I \in [N_{L-\ell}]}$, and with connection functions $g^{\neq}_\delta$ and $g^{=}_\eta$.
\end{itemize}
The standard $\gamma$-stable random variable $Y_1$ on $\R_+$ has Laplace transform $\EE\big[\ee^{-\lambda Y_1}\big] = \ee^{-\lambda^\gamma}$, $\lambda \in \R_+$. It has tail probabilities $\PP(Y_1 > x) \sim x^{-\gamma}/\Gamma(1-\gamma)$, $x \to \infty$, and $\PP(Y_1 \leq x) = o(1)\,\ee^{-x^{-\gamma}}$, $x \downarrow 0$. See \cite[Section XIII.6]{F} for more details. 

\begin{theorem}{\bf [Averaged stable-law attractor]}
\label{thm2}
Suppose that
\[
(\blacksquare) \qquad 
\begin{aligned}
&\PP(X_1 > x) \sim A x^{-\alpha}, \quad x \to \infty,\\
&\big|g^{\neq}(x,y) - B(xy)^\beta\big| \leq K^{\neq}, \quad x,y \in \R_+,\\
&\big|g^{=}(x) - Cx^\beta\big| \leq K^{=}, \quad x \in \R_+,
\end{aligned}
\]
with $\alpha \in (0,\infty)$ a tail exponent, $\beta \in (0,\infty)$ a growth exponent, and $A,B,C,K^{\neq},K^{=} \in (0,\infty)$ constants. Suppose, in addition, that the law of $X_1$ is absolutely continuous. If $\alpha<\beta$, then there exists a coupling of $(X,Y)$ via a sequence $U$ of i.i.d.\ $\mathrm{UNIF}[0,1]$ random variables, written $(X(U),Y(U))$, such that  
\[
\lim_{\ell \to \infty} \sup_{L \geq \ell}\,\EE_U\left[\Delta\Big(P(G^{(\ell)}[X(U)]),P(G_{\ell,\gamma,\delta,\eta}[Y(U)])\Big)\right] = 0
\]
with $\gamma = \frac{\alpha}{\beta}$, $\delta = B\big(A\Gamma(1-\gamma)\big)^{2/\gamma}$ and $\eta = C\big(A\Gamma(1-\gamma)\big)^{1/\gamma}$.
\end{theorem}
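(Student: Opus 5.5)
The plan is to reduce the specific relative entropies to averages of per-pair binary relative entropies. Both graphs have independent edges given the weights, so for a pair of level-$\ell$ vertices $I\neq J$ the edge contribution is the binary relative entropy $h(p_{IJ}\,|\,p'_{IJ})$ between the two connection probabilities, and similarly for loops. By exchangeability, $\EE_U[S^{\neq}]$ reduces to $\EE_U$ of the contribution of a single generic pair $(I,J)$, and $\EE_U[S^{=}]$ to that of a single $I$; this is where the uniformity in $L\ge\ell$ comes from. Write $p=1-\ee^{-G}$ and $p'=1-\ee^{-H}$. The binary relative entropy is controlled by $|G-H|$ together with a lower bound on $H$ (or on $G$) to handle the logarithm near $0$. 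So it suffices to show that
\[
G_\ell=(m^{2\ell})^{-1/\gamma}\sum_{i\in I,\,j\in J} g^{\neq}(X_i,X_j)
\qquad\text{and}\qquad
H_\ell=\delta Y_IY_J
\]
are close in probability, with enough uniform integrability to pass to expectations.

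Next comes the coupling. Take i.i.d.\ $U_i$ and set $X_i=F^{-1}(U_i)$, which is legitimate because the law is absolutely continuous. Define $Z_i=X_i^\beta$, so $\PP(Z_1>z)\sim Az^{-\gamma}$ with $\gamma=\alpha/\beta<1$. The stable limit of the block sum $m^{-\ell/\gamma}\sum_{i\in I}Z_i$ is $(A\Gamma(1-\gamma))^{1/\gamma}Y$. To obtain an almost-sure coupling I would use the LePage series representation. Order the $U_i$ in the block into decreasing $Z$ order statistics, and note that the $k$-th largest satisfies $Z_{(k)}\approx (A m^\ell/\Gamma_k)^{1/\gamma}$, where $\Gamma_k$ are partial sums of exponentials built from the same uniforms (e.g.\ via the Rényi representation of uniform order statistics). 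Then define $Y_I$ through $\sum_k \Gamma_k^{-1/\gamma}$, normalised so that its Laplace transform is $\ee^{-\lambda^\gamma}$. With this construction, $S_I:=m^{-\ell/\gamma}\sum_{i\in I}Z_i-(A\Gamma(1-\gamma))^{1/\gamma}Y_I\to0$ in probability, by the standard LePage/Skorokhod argument for $\gamma<1$. The product structure then gives the pair sum. Since $|g^{\neq}-B(xy)^\beta|\le K^{\neq}$,
\[
G_\ell = B\,m^{-2\ell/\gamma}\Big(\sum_{i\in I} Z_i\Big)\Big(\sum_{j\in J} Z_j\Big) + O\big(K^{\neq} m^{2\ell}m^{-2\ell/\gamma}\big),
\]
and the error term vanishes because $\gamma<1$. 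Hence $G_\ell-\delta Y_IY_J\to0$ in probability, with $\delta=B(A\Gamma(1-\gamma))^{2/\gamma}$. The loop term is analogous. It contains $m^{-\ell/\gamma}\sum_{i\in I}Z_i$ with the constant $C$, which produces $\eta$. It also contains the diagonal contribution $\tfrac12\mathfrak g^{\neq}_\ell(x,x)$ summed over $i\in I$, whose terms are $\sim Z_i^2$ and add to the pair sum to complete the square $(\sum_{i\in I} Z_i)^2$, consistent with $g^{\neq}_\delta$ evaluated at $Y_I,Y_I$.

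The main obstacle is upgrading convergence in probability to convergence of $\EE_U$ of the binary relative entropies. This needs control near $p'\to0$ (small $Y$) and near $p'\to1$ (large $Y$). On the event where both $G$ and $H$ lie in a compact subset of $(0,\infty)$, continuity gives the result. Off that event, I would bound $h(p\,|\,p')\le |G-H|\cdot\sup|\partial_H h|$, and use the small-ball estimate $\PP(Y\le x)=o(1)\ee^{-x^{-\gamma}}$ to show that $\log(1/p')\approx\log(1/(\delta Y_IY_J))$ has all moments. For large weights, $h\le -\log(1-p')=H$ is not integrable because $Y$ has infinite mean. To get around this, I would use the bound $h(p\,|\,p')\le p\log(p/p')+(1-p)(H-G)_+$ together with $(1-p)H\le \ee^{-G}H$. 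Since $G\approx H$ on large values, this stays bounded, and I would truncate at $H>M$ and show that the truncated contribution tends to zero uniformly in $\ell$.
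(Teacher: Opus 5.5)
Your outline follows the same plan as the paper's proof. You reduce by exchangeability to one pair and one block. You use the deterministic estimate $|G_\ell-B\hat X_I\hat X_J|\le r_\ell:=K^{\neq}m^{2\ell(1-1/\gamma)}$, which is the paper's $(\sharp_1)$, with $\hat X^{(\ell)}_I=m^{-\ell/\gamma}\sum_{i\in I}X_i^\beta$. You then get convergence in probability of $G_\ell-H_\ell$ from a coupling, and split into small, moderate and large values of $H$.

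The coupling is where you differ. The paper couples block sums by quantiles, $\hat X^{(\ell)}_I=F_\ell^{\leftarrow}(U_I)$ and $\sigma_\gamma\hat Y_I=F^{\leftarrow}(U_I)$, with $\sigma_\gamma=(A\Gamma(1-\gamma))^{1/\gamma}$. Your LePage construction couples the individual weights, which has the merit of producing the microscopic $X_i$ directly. It does need auxiliary randomness: the uniforms of a block only determine the ratios $\Gamma_k/\Gamma_{n+1}$, not $\Gamma_{n+1}$ itself or $\Gamma_k$ for $k>n$.

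For small $H$, the mean-value bound is the wrong tool, because $|\partial_H h|\approx p/H$ there. Use instead $h\le\log(1/p')+\ee^{-G}(H-G)_+\le\log(1/p')+H$. This works because $\log(1/p')$ has an $\ell$-independent integrable law.

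The genuine gap is the large-$H$ step. Since $h\ge\ee^{-G}(H-G)-\ee^{-1}$, one has $h\ge\ee^{-1}H-2$ on $\{G\le1\}$. So you must at least show that $\EE[H\,1_{\{G\le1,\,H>M\}}]$ is small for large $\ell$. Because $H=\delta Y_IY_J$ has infinite mean, this does not follow from $G-H\to0$ in probability. Your sentence ``since $G\approx H$ on large values'' is exactly the claim that needs proof.

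The dangerous configuration pairs a huge $Y_I$ with a block $J$ whose sum $\hat X^{(\ell)}_J$ is much smaller than $\sigma_\gamma Y_J$. Integrating over block $I$ then leaves a term of order $\EE[Y_J(\hat X^{(\ell)}_J)^{-(1-\gamma)}]$. Under the stated hypotheses this can be infinite. Take $\PP(X_1\le x)\asymp1/\log(1/x)$ as $x\downarrow0$, which is still absolutely continuous. Then $\PP(\hat X^{(\ell)}\le x)\ge\big(c/\log(m^{\ell(1-1/\gamma)}/x)\big)^{m^\ell}$, so every negative moment of $\hat X^{(\ell)}$ is infinite. A rearrangement argument then gives $\EE[H\,1_{\{G\le1\}}]=\infty$ for every $\ell$ and every blockwise coupling, yours and the paper's alike.

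The paper does not fill this in. Its treatment of the corresponding event $E^{\mathrm c}_{\ell,M}\cap\{w_\ell>R\}$ is a one-sentence assertion with the same defect: there $\ee^{-(u_\ell\wedge v_\ell)}$ is of order one when $v_\ell$ is small, while $|v_\ell-w_\ell|\approx w_\ell$. So it cannot be used to patch your argument.

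What is missing is an extra small-ball hypothesis, e.g.\ $\PP(X_1\le x)\le cx^{\kappa}$. Via a Laplace-transform bound this gives $\sup_\ell\EE[(\hat X^{(\ell)})^{-q}]<\infty$. You would also need a uniform comparison of upper tails under your coupling. With both in hand, $\ee^{-G}(H-G)_+\le H\ee^{-G}\le\ee^{r_\ell-1}\sigma_\gamma^2\,(Y_I/\hat X^{(\ell)}_I)(Y_J/\hat X^{(\ell)}_J)$, a product of independent factors. A bound $\sup_\ell\EE[(Y_I/\hat X^{(\ell)}_I)^{2+\epsilon}]<\infty$ then gives the uniform integrability that finishes both the edge and the loop terms.
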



\subsection{Discussion and open problems}
\label{ss.discussion}

{\bf 1.} 
To the best of our knowledge, the present paper is the first to provide a fully rigorous renormalisation analysis of a large class of inhomogeneous random graphs with random weights, both in a light-tailed regime and in a heavy-tailed regime. Universality classes and domains of attraction are identified in both regimes, exhibiting different scaling behaviour. We focus on a particular algorithm for the renormalisation, namely, the `greedy connection algorithm', although the tools we use in our analysis are flexible (see items 9--10 below). From a physical perspective, identifying previously known \emph{fixed points} of a renormalisation transformation as \emph{attractors} of the associated flow is a powerful step forward, because it provides an explicit and natural mechanism (namely, the iterated change of resolution level) according to which effective aggregate models of networks arise, irrespectively of microscopic details that are irrelevant at the macroscopic scale.

\medskip\noindent
{\bf 2.}  Theorem~\ref{thm1} establishes universality for the renormalisation in the light-tailed regime: after renormalisation and appropriate scaling of the connection functions, $G^{(\ell)}[X]$ and $G_{\ell,p}$ are close to each other in the sense of specific relative entropy when $\ell\to\infty$, uniformly in $L\geq \ell$, provided
\[
q^{\neq}=\EE_X[g^{\neq}(X_1,X'_1)],
\qquad
q^{=}=\EE_X[g^{=}(X_1)],
\]
and
\[
p^{\neq}=1-\ee^{-q^{\neq}},
\qquad
p^{=}=1-\ee^{-\frac12(q^{\neq}+q^{=})}.
\]
The term $q^{\neq}$ comes from level-$0$ edges inside a block, while the term $q^{=}$ comes from level-$0$ self-loops. The scaling limit is \emph{homogeneous}, i.e., there are \emph{no} weights involved, and the claim holds in expectation. Both Theorem~\ref{thm1} and Theorem~\ref{thm2} are averaged relative-entropy statements. In Theorem~\ref{thm1}, the law of the target graph is deterministic, because the limiting Erd\H{o}s--R\'enyi graph has deterministic edge and loop probabilities. Convergence to zero of the specific relative entropy means that around a \emph{typical edge} and a \emph{typical vertex} the probability laws are asymptotically the same. The domain of attraction given by $(\Box)$ is large: it includes arbitrary $g$ combined with bounded $X_1$, as well as $g$ bounded by a bi-variate polynomial and $X_1$ having a finite moment of an appropriate order, as illustrated in Corollary~\ref{cor:product-light}.  

\medskip\noindent
{\bf 3.}
Theorem \ref{thm2} establishes the same in the heavy-tailed regime, but with a scaling that depends on the tail exponent of the distribution of $X_1$ and the growth exponent of $g^{\neq}$, $g^{=}$, and with a scaling limit that is \emph{inhomogeneous}. The limit is invariant in the sense of Lemma~\ref{leminv} and has stable law random weights with exponent $\gamma$. In Theorem~\ref{thm2}, the target graph law is itself random through the stable weights. The coupling in Theorem~\ref{thm2} is used to realise the original and limiting weight families on a common probability space. For product kernels $g^{\neq} (x,y) = B(xy)^\beta + O(1)$ and $g^{=}(x) = Cx^\beta + O(1)$, the case $\alpha>\beta$ is covered by Theorem~\ref{thm1}, as noted in Corollary~\ref{cor:product-light}, while the case $\alpha<\beta$ is covered by Theorem~\ref{thm2}. Thus, the borderline between the homogeneous and the stable-law attractor is governed by the finiteness or infiniteness of the first effective moment $\mathbb E[X_1^\beta]$. The domain of attraction given by $(\blacksquare)$ is fair, but far from exhaustive. It remains open to obtain a full classification in the heavy-tailed regime. For instance, what happens when $g^{\neq}(x,y) = B(x+y)^{\beta_1} + O(1)$, $x,y \to \infty$, and $g^{=}(x) = Cx^{\beta_2} + O(1)$, $x \to \infty$?

\medskip\noindent
{\bf 4.} 
Previous research has shown that several random graph models defined by the same (random) vertex weights but different connection functions are either asymptotically \emph{equivalent} \cite{BJR} or mutually \emph{contiguous} \cite{J} under conditions that ultimately require the finiteness of certain moments of the weights. For instance, the three models defined by the connection probabilities
\[
f^{\neq}_1(x,y)=\kappa(x,y)\wedge 1, \qquad
f^{\neq}_2(x,y)=\frac{\kappa(x,y)}{1+\kappa(x,y)}, \qquad
f^{\neq}_3(x,y)=1-\ee^{-\kappa(x,y)},
\]
for the same kernel $\kappa(x,y)$, are asymptotically equivalent or mutually contiguous when certain conditions are met \cite{BJR,J}. In our setting, the corresponding connection functions are
\[
g^{\neq}_1(x,y)=-\log[1-\kappa(x,y)\wedge 1], \qquad
g^{\neq}_2(x,y)=\log[1+\kappa(x,y)], \qquad
g^{\neq}_3(x,y)=\kappa(x,y),
\]
with the convention $-\log 0=\infty$. Thus the relevant object for the renormalisation is not the connection probability $f$ itself, but the connection function $g^{\neq}=-\log(1-f^{\neq})$.
\begin{itemize}
\item
In the light-tailed regime, if the corresponding intensities satisfy $(\Box)$, then Theorem~\ref{thm1} applies to each of these models and gives a homogeneous Erd\H{o}s--R\'enyi attractor, with parameter $p^{\neq}$ determined by the corresponding value of
\[
\EE_X[g^{\neq}_i(X_1,X'_1)].
\]
For the model defined by $f^{\neq}_1$, this requires in particular that $\kappa(x,y)<1$ on the relevant support, or at least that $-\log[1-\kappa(X_1,X'_1)\wedge 1]$ be integrable. Otherwise $g^{\neq}_1$ may be infinite with positive probability and Theorem~\ref{thm1} does not apply.
\item
In the heavy-tailed regime, the moment assumptions that guarantee asymptotic equivalence or mutual contiguity may fail, and models that are comparable at level-$0$ may lie in different domains of attraction under the renormalisation map. For instance, take
\[
\kappa(x,y)=B(xy)^\beta, \qquad B,\beta\in(0,\infty),
\]
and assume that $\PP(X_1>x)\sim Ax^{-\alpha}$ as $x\to\infty$. For the model $f^{\neq}_2$, the associated connection function is $g^{\neq}_2(x,y) = \log\bigl(1+B(xy)^\beta\bigr)$, which satisfies the integrability condition in $(\Box)$. Indeed,
\[
\log\bigl(1+B(X_1X'_1)^\beta\bigr) \leq C+\beta\log(1+X_1)+\beta\log(1+X'_1)
\]
for a constant $C<\infty$, and $\EE_X[\log(1+X_1)]<\infty$ for regularly varying $X_1$ with tail exponent $\alpha>0$. 
Similarly,
\[
\EE_X\bigl[\log(1+BX_1^{2\beta})\bigr]<\infty.
\]
Hence $f^{\neq}_2$ falls within the light-tailed theorem (when $\EE_X[g^{=}_2(X)]<\infty$) and flows to a homogeneous Erd\H{o}s--R\'enyi attractor. For the model $f^{\neq}_3$, however, the associated connection function is
\[
g^{\neq}_3(x,y) = B(xy)^\beta.
\]
When $\alpha<\beta$, the random variable $X_1^\beta$ has tail exponent $\gamma=\alpha/\beta\in(0,1)$, and the hypotheses $(\blacksquare)$ of Theorem~\ref{thm2} are satisfied (when the condition for $g^{=}_3$ is in force). Therefore $f^{\neq}_3$ is attracted to the inhomogeneous $\gamma$-stable attractor. This shows that $f^{\neq}_2$ and $f^{\neq}_3$, although built from the same kernel $\kappa$, need not have the same attractor in the heavy-tailed regime. 
\item 
In the particular case where $\kappa(x,y) = \delta xy$, the three models in item 4 have special meanings: 
\begin{itemize}
\item
$f^{\neq}_1(x,y) = \delta xy\wedge 1$ is the Chung--Lu model \cite{CL}, 
\item
$f^{\neq}_2(x,y) = \frac{\delta xy}{1+\delta xy}$ is the canonical (or maximum-entropy) configuration model \cite{CCM},
\item
$f^{\neq}_3(x,y) = 1-\ee^{-\delta xy}$ is the invariant model in Lemma~\ref{leminv}. 
\end{itemize}
Thus, in the light-tailed regime, these models may belong to the same homogeneous universality class, whereas in the heavy-tailed regime they may be separated into different domains of attraction. 
\end{itemize}

\medskip\noindent
{\bf 5.}
Another notion of equivalence between different models has been established in the context of Exponential Random Graphs (ERGs)~\cite{Chatterjee}, which are defined as random graph ensembles with a probability distribution that maximises Shannon entropy~\cite{CCM} while enforcing a desired set of constraints in terms of expected network properties. In particular, it has been shown~\cite{Bhamidi,Diaconis} that, in the dense regime, various ERGs defined through different constraints are asymptotically indistinguishable from the Erd\H{o}s--R\'enyi model (which is itself an ERG, defined through a given expected link density). Those results indicate that it is hard to perturb the homogeneous Erd\H{o}s--R\'enyi model in the dense regime via the addition of other constraints. Under the different lens of renormalisation, our results show that: on one hand, the dense Erd\H{o}s--R\'enyi model is indeed a homogeneous attractor for a large class of microscopic models (Theorem \ref{thm1}); on the other hand, a different dense graph, clearly distinguishable from the Erd\H{o}s--R\'enyi one, exists and acts as an inhomogeneous attractor for a separate class of models (Theorem \ref{thm2}).

\medskip\noindent
{\bf 6.} 
The scaling of the connection functions has a simple probabilistic interpretation. In the light-tailed regime, no single microscopic (= level-$0$) vertex dominates a block. Many small contributions average out, and the coarse-grained graph is effectively homogeneous. This corresponds to a law of large numbers universality class: microscopic heterogeneity is washed out under aggregation. In the heavy-tailed regime, however, a few large weights may dominate a block. Aggregation, even under the greedy rule, does not erase this heterogeneity. Instead, the effective block weights converge to a stable law, and the coarse-grained graph remains inhomogeneous. The word \emph{dilute} should be understood in this sense. It refers to weakening of individual microscopic connection probabilities, and not to sparsity in the sense of bounded average degree. In the light-tailed regime, a typical microscopic connection probability is of order $m^{-2\ell}$. Since the original graph has $n=m^L$ vertices, the microscopic expected degree is typically of order $n m^{-2\ell}=m^{L-2\ell}$, up to constants depending on the connection function and the weight distribution. Hence the microscopic graph has bounded average degree only in the regime $L-2\ell = O(1)$. In general, the microscopic graph is diluted because individual edge probabilities are small, but it need not be sparse. A concrete example is obtained by taking, in the light-tailed regime, the rescaled invariant edge probability
\[
\mathfrak{f}^{\neq}_{\ell}(x,y)
= 1 - \exp\left(-\frac{\delta xy}{m^{2\ell}}\right).
\]
Then each pair of vertices has only a small probability of being connected. However, between two level-$\ell$ blocks $I^{(\ell)},J^{(\ell)}$ the effective connection intensity is
\[
\mathfrak{g}^{\neq}_\ell (X_{I^{(\ell)}},X_{J^{(\ell)}}) 
= \frac{\delta}{m^{2\ell}} \sum_{i\in I^{(\ell)},j\in J^{(\ell)}}X_iX_j
= \delta \left(\frac{1}{m^\ell}\sum_{i\in I^{(\ell)}}X_i\right) \left(\frac{1}{m^\ell}\sum_{j\in J^{(\ell)}}X_j\right).
\]
If $X_1$ has finite mean, then this product converges to $\delta(\mathbb E[X_1])^2$, and the coarse-grained graph becomes a homogeneous Erd\H{o}s--R\'enyi random graph with edge probability
\[
1-\exp(-\delta(\mathbb E[X_1])^2).
\]
This example shows that the dilution is not merely a technical device: it is the scale at which many weak microscopic interactions combine into a macroscopic interaction of order one.

\medskip\noindent
{\bf 7.}
The scaling of the connection functions $g^{\neq},g^{=}$ to the connection functions $\mathfrak{g}^{\neq}_\ell,\mathfrak{g}^{=}_\ell$ is important in order to end up with a \emph{non-trivial graph limit}. Indeed, under the aggregation, weights `accumulate' and the graph becomes `denser'. By starting with a dilute graph where the edges have a density of order $m^{-2\ell}$, respectively, $(m^{-2\ell})^{1/\gamma}$, and the loops have a density of order $m^{-\ell}$, respectively, $(m^{-\ell})^{1/\gamma}$, the initial graph is \emph{pre-engineered} so that after $\ell$ renormalisations it acquires an edge density and a loop density of order $1$. Consequently, the whole renormalisation flow acts in the space of \emph{dense graphs}, from dilute to non-dilute. Without the rescaling, the graph limit would be the complete graph, which is not interesting.

\medskip\noindent
{\bf 8.}
Relative entropy satisfies the contraction property: $S(TP \mid TQ) \leq S(P \mid Q)$ for any two probability measures $P,Q$ to which an aggregation map $T$ is applied, i.e., a surjection \cite{CT}. It therefore follows that $\ell \mapsto S(P(G^{(\ell)}[X]) \mid P(\bar{G}^{(\ell)}[\bar{X}]))$ is non-increasing for any two starting graphs $G[X],\bar{G}[\bar{X}]$.  Indeed, for fixed weights $X$, the map sending $G^{(\ell-1)}[X]$ to $G^{(\ell)}[X]$ is a deterministic map on graph configurations. Hence the law $P(G^{(\ell)}[X])$ is the push-forward of $P(G^{(\ell-1)}[X])$ under this map. Therefore relative entropy is a natural quantity to analyse under renormalisation.

\medskip\noindent
{\bf 9.}
It is interesting to pursue other types of aggregation as well. For instance, we may connect two level-$1$ vertices if and only if \emph{every vertex in one group is connected to every vertex in the other group} (= `lazy connection algorithm'). In that case the connection probabilities renormalise as
\[
\begin{aligned}
f^{\neq}_{I^{(1)},J^{(1)}}[X] &= \prod_{i \in I^{(1)},\,j \in J^{(1)}} f^{\neq}(X_i,X_j), \qquad I^{(1)} \neq J^{(1)},\\
f^{=}_{I^{(1)},I^{(1)}}[X] &= \prod_{ {i,j \in I^{(1)}} \atop {i<j} } f^{\neq}(X_i,X_j) \prod_{ i \in I^{(1)}} f^{=}(X_i),
\end{aligned} 
\]
The proper change of variables is
\[
f^{\neq}(x,y) = \ee^{-g^{\neq}(x,y)}, \qquad
f^{=}(x) = \ee^{-\tfrac{1}{2} [g^{\neq}(x,x)+g^{=}(x)]}.
\]
Iteration of the renormalisation gives
\[
g^{\neq}_{I^{(\ell)},J^{(\ell)}}[X] = \sum_{i \in I^{(\ell)},\,j \in J^{(\ell)}} g^{\neq}(X_i,X_j),
\qquad g^{=}_{I^{(\ell)},I^{(\ell)}}[X] = \sum_{i \in I^{(\ell)}} g^{=}(X_i).
\]
The invariant connection functions are
\[
g^{\neq}_\delta(x,y) = \delta xy,\qquad g^{=}_\eta(x)=\eta x.
\]
In the present paper we will not pursue such generalisations, even though the arguments developed in Sections~\ref{s.renor}--\ref{s.proofs} can in principle be carried over.

\medskip\noindent
{\bf 10.}
Another example, not based on an aggregation algorithm, is where we connect two level-$1$ vertices with a probability that equals the \emph{maximum of the connection probabilities of the constituent pairs} in the two groups. In that case the connection probabilities renormalise as
\[
\begin{aligned}
f^{\neq}_{I^{(1)},J^{(1)}}[X] &= \max_{i \in I^{(1)},\,j \in J^{(1)}} f^{\neq}(X_i,X_j), \qquad I^{(1)} \neq J^{(1)},\\
f^{=}_{I^{(1)}}[X] &= \max_{i \in I^{(1)}} f^{=}(X_i).
\end{aligned}
\]
Iteration of the renormalisation gives
\[
\begin{aligned}
f^{\neq}_{I^{(\ell)},J^{(\ell)}}[X] &= \max_{i \in I^{(\ell)},\,j \in J^{(\ell)}} f^{\neq}(X_i,X_j), \qquad I^{(\ell)} \neq J^{(\ell)},\\
f^{=}_{I^{(\ell)}}[X] &= \max_{i \in I^{(1)}} f^{=}(X_i).
\end{aligned}
\]
If $f^{\neq},f^{=}$ are non-decreasing in both arguments, then
\[
\begin{aligned}
\max_{i \in I^{(\ell)},\,j \in J^{(\ell)}} f^{\neq}(X_i,X_j) &= f^{\neq}(\tilde{X}_{I^{(\ell)}},\tilde{X}_{J^{(\ell)}}),
\qquad I^{(\ell)} \neq J^{(\ell)},\\
\max_{i \in I^{(\ell)}} f^{=}(X_i) &= f^{=}(\tilde{X}_{I^{(\ell)}}),
\end{aligned}
\]
with $\tilde{X}_{I^{(\ell)}} = \max_{i \in I^{(\ell)}} X_i$. In other words, the aggregation no longer sums the weights, but maximises the weights.
 
For heavy-tailed weights the difference between sums and maxima of constituent weights should be small, so that different models should exhibit similar scaling behaviour. More in general, it is interesting to compare different renormalisation maps applied to the same starting graph. 

\medskip\noindent
{\bf 11.}
What happens when the weights are dependent? A natural extension is to assume that the infinite sequence $(X_i)_{i\geq1}$ is exchangeable, in the sense that its joint law is invariant under finite permutations. By de Finetti's theorem, this amounts to weights that are conditionally i.i.d. Their dependence may affect the attractors of the renormalisation. Again we may ask what happens for multiscale stochastic block models, configuration-type models, or weights arising from more general infinitely divisible laws.


\subsection{Outline of the remainder}
\label{ss.outline}

Section~\ref{s.renor} analyses the renormalisation transformation by identifying its iterates and establishing bounds on the relative entropy of the laws of the renormalised connection functions. Section~\ref{s.proofs} provides the proofs of the invariance lemma and the two scaling theorems.


\section{Renormalisation transformation}
\label{s.renor}

In Section~\ref{ss.iter} we identify how the renormalisation transformation iterates. In Section~\ref{ss.relentr} we estimate specific relative entropies. In Section~\ref{ss.pertrelentr} we estimate perturbations of specific relative entropies. The results will be used in Section~\ref{s.proofs} to prove our main theorems. Appendix~\ref{App} collects a few basic facts about stable laws, domains of attraction and coupling.   


\subsection{Iteration}
\label{ss.iter}

The renormalisation transformation defined in Section~\ref{ss.map} can be iterated $L$ times when $n=m^L$. After $L$ iterations the renormalised graph $G^{(L)}$ consists of a \emph{single vertex}.
For $0 < \ell \le L$, aggregate level-$(\ell-1)$ vertices into disjoint groups of size $m$ each, referred to as level-$(\ell)$ vertices, to obtain an aggregated graph $G^{(\ell)}$ with connection function $f^{(\ell)}$ obtained iteratively as
\[
\begin{aligned}
f^{\neq}_{I^{(\ell)},J^{(\ell)}}[X] 
&= 1- \prod_{I^{(\ell-1)} \in I^{(\ell)},\,J^{(\ell-1)} \in J^{(\ell)}} \big(1-f^{\neq}_{I^{(\ell-1)},J^{(\ell-1)}}[X]\big), \quad I^{(\ell)} \neq J^{(\ell)},\\
f^{=}_{I^{(\ell)},I^{(\ell)}}[X] 
&= 1 - \prod_{ {I^{(\ell-1)}, J^{(\ell-1)} \in I^{(\ell)}} \atop {I^{(\ell-1)} < J^{(\ell-1)}} } \big(1-f^{\neq}_{I^{(\ell-1)},J^{(\ell-1)}}[X]\big)
\prod_{I^{(\ell-1)} \in I^{(\ell)}} \big(1-f^{=}_{I^{(\ell-1)}}[X]\big),
\end{aligned}
\] 
which after the change of variables reads
\[
\begin{aligned}
g^{\neq}_{I^{(\ell)},J^{(\ell)}}[X] 
&= \sum_{I^{(\ell-1)} \in I^{(\ell)},\,J^{(\ell-1)} \in J^{(\ell)}} 
g^{\neq}_{I^{(\ell-1)},J^{(\ell-1)}}[X],\\
g^{=}_{I^{(\ell)}}[X] 
&= \sum_{I^{(\ell-1)} \in I^{(\ell)}}
g^{=}_{I^{(\ell-1)}}[X].
\end{aligned} 
\]
When the right-hand sides of the above expressions are recursively expanded down to the microscopic level (= level-$0$) where $I^{(0)}=i$ and $J^{(0)}=j$, we obtain the iterated transformation equations shown in Section~\ref{ss.map}. 

As already noted, while the connection functions $g^{\neq}_{i,j}[X]=g^{\neq}(X_i,X_j)$ at level $\ell=0$ are assumed to depend on $X$ only through the two vertex weights $X_i$ and $X_j$, this is in general not true for levels $\ell>0$, where we can instead write 
\[
g^{\neq}_{I^{(\ell)},J^{(\ell)}}[X]=g^{\neq}(X_{I^{(\ell)}},X_{J^{(\ell)}}),
\quad X_{I^{(\ell)}} \equiv \{X_i\}_{i \in I^{(\ell)}}
\]
which depends on the fulls sets $X_{I^{(\ell)}}$, $X_{J^{(\ell)}}$ of microscopic weights of the constituent vertices of the aggregate vertices $I^{(\ell)}$ and $J^{(\ell)}$.

For the special case $g^{\neq}=g^{\neq}_\delta$, $g^{=}=g^{=}_\eta$, iteration gives
\[
g_\delta^{\neq}(X_{I^{(\ell)}},X_{J^{(\ell)}}) = \delta \bar{X}_{I^{(\ell)}} \bar{X}_{J^{(\ell)}},
\qquad g_\eta^{=}(X_{I^{(\ell)}}) = \eta \bar{X}_{I^{(\ell)}},
\]
with $\bar{X}_{I^{(\ell)}} = \sum_{i \in I^{(\ell)}} X_i$. For this invariant case, the connection functions $g^{\neq}_{I^{(\ell)},J^{(\ell)}}[X]$, $g^{=}_{I^{(\ell)}}[X]$ therefore still depend only on the two (renormalised) weights $\bar{X}_{I^{(\ell)}}$, $\bar{X}_{J^{(\ell)}}$ of the end-point aggregated vertices $I^{(\ell)}$ and $J^{(\ell)}$.


\subsection{Relative entropy}
\label{ss.relentr}

To measure how close a graph measure $P_f(G)$ is to another graph measure $P_{f'}(G)$ on the same set $\mathcal{G}_N$ of graphs with $N$ vertices, we consider the relative entropy
\[
H(P_f\mid P_{f'}) = \sum_{G\in\mathcal{G}_N}P_f(G)\log\frac{P_f(G)}{P_{f'}(G)} 
\]
For a random graph with $\binom{N}{2}$ independent edges and $N$ independent self-loops, the graph measure factorises into $\binom{N}{2} + N = \binom{N+1}{2}$ Bernoulli variables. Consequently, the relative entropy separates additively into such variables as 
\[
H(P_f \mid P_{f'}) = \sum_{1 \leq i \le j \leq N} 
\left[f_{ij} \log \left(\frac{f_{ij}}{f'_{ij}}\right) + (1-f_{ij}) \log \left(\frac{1-f_{ij}}{1-f'_{ij}}\right)\right]
\]
(where $f_{ij}$ and $f'_{ij}$ are the connection probabilities between $i$ and $j$ under $P_f$ and $P_{f'}$ respectively).
The above expression takes into account both edges between distinct pairs of vertices ($i<j$) and self-loops at individual vertices ($i=j$). To characterise asymptotic convergence to the attractor(s), we could consider the \emph{specific relative entropy} (= relative entropy density)
\[
S(P_f\mid P_{f'})  = \binom{N+1}{2}^{-1} \sum_{1 \leq i \le j \leq N} 
\left[f_{ij} \log \left(\frac{f_{ij}}{f'_{ij}}\right) + (1-f_{ij}) \log \left(\frac{1-f_{ij}}{1-f'_{ij}}\right)\right].
\]
Although this would be sufficient to characterise convergence, we note that the self-loop contribution, consisting of $N$ terms in the sum, would be overruled by the overall normalising factor. Therefore, to gain proper control on the convergence of the laws of the edges and of the self-loops separately, we first define two separately normalised relative entropies $S^{\neq}(P_f \mid P_{f'})$ and $S^{=}(P_f \mid P_{f'})$ for edges and self-loops respectively, i.e.,
\[
\begin{aligned}
S^{\neq}(P_f \mid P_{f'}) &= \binom{N}{2}^{-1} \sum_{1 \leq i < j \leq N} 
\left[f_{ij} \log \left(\frac{f_{ij}}{f'_{ij}}\right) + (1-f_{ij}) \log \left(\frac{1-f_{ij}}{1-f'_{ij}}\right)\right],\\ 
S^{=}(P_f \mid P_{f'}) &= N^{-1} \sum_{1 \leq i \leq N} \left[f_{ii} \log \left(\frac{f_{ii}}{f'_{ii}}\right) 
+ (1-f_{ii}) \log \left(\frac{1-f_{ii}}{1-f'_{ii}}\right)\right], 
\end{aligned} 
\]
and afterwards combine them into the single quantity
\[
\Delta(P_f,P_{f'}) = S^{\neq}(P_f \mid P_{f'}) + S^{=}(P_f \mid P_{f'}).
\]
Note that, while $\Delta(P_f,P_{f'})$ is not a relative entropy \emph{per se}, its vanishing is sufficient to ensure the separate convergence of the law of the edges and the law of the self-loops encoded in $P_f$ to the corresponding laws encoded in $P_{f'}$.

In terms of $g_{ij} = -\log(1-f_{ij})$ and $g'_{ij} = -\log(1-f'_{ij})$ we have the following estimate. In what follows we use the following bounds, valid for $U,V \in (0,1)$:
\[
(\ast) \qquad (1-U) \log \left(\frac{1-U}{1-V}\right) = (1-U) \log \left(1 + \frac{V-U}{1-V}\right)
\left\{\begin{array}{ll}
\leq (1-U)\,\frac{V-U}{1-V},\\[0.2cm]
\geq V-U.
\end{array}
\right.
\]

\begin{lemma}
\label{lem:entrbd1}
$0 \leq H^{\neq}(P_f \mid P_{f'}) \leq d^{\neq}(g,g')$ and $0 \leq H^{=}(P_f \mid P_{f'}) \leq d^{=}(g,g')$, where
\[
d^{\neq}(g,g') = \binom{N}{2}^{-1} \sum_{1 \leq i < j \leq N} \zeta(g_{ij},g'_{ij}), \qquad
d^{=}(g,g') = N^{-1} \sum_{1 \leq i \leq N} \zeta(g_{ii},g'_{ii}),
\]
with
\[
(\heartsuit_1) \qquad  \zeta(a,b) = [1 \wedge (a-b)]^2\,\frac{1}{b}\,1_{\{a>b\}} + \ee^{-a}(b-a)\,1_{\{b>a\}}, 
\qquad a,b \in \R_+.
\]
\end{lemma}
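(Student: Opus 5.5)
The plan is to reduce everything to a single Bernoulli pair and then average. By the factorisation in Section~\ref{ss.relentr}, $H^{\neq}$ and $H^{=}$ (the normalised quantities $S^{\neq}$, $S^{=}$) are averages over $i<j$, respectively over $i$, of the Bernoulli relative entropy
\[
D(f\,\|\,f') = f\log\frac{f}{f'} + (1-f)\log\frac{1-f}{1-f'}.
\]
It therefore suffices to show, for a single pair with $f=1-\ee^{-a}$ and $f'=1-\ee^{-b}$ ($a,b\in\R_+$), that
\[
0\le D(f\,\|\,f')\le \zeta(a,b).
\]
Nonnegativity is Gibbs' inequality, using the conventions $0\log 0=0$ and $\zeta(a,0)=\infty$ for $a>0$. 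The key observation is that the second term is explicit in the $g$-variables:
\[
\frac{1-f}{1-f'}=\ee^{b-a},
\qquad\text{so}\qquad
(1-f)\log\frac{1-f}{1-f'} = \ee^{-a}(b-a).
\]
I then split into three cases according to the sign of $a-b$.

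\emph{Case $b>a$.} Here $f<f'$, so $f\log(f/f')\le 0$. Hence $D\le \ee^{-a}(b-a)$, which is exactly the second term of $(\heartsuit_1)$.

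\emph{Case $a>b$.} Here I use the $\chi^2$ upper bound for relative entropy, which follows from $\log x\le x-1$ applied to both terms (in the spirit of $(\ast)$):
\[
D(f\,\|\,f')\le \frac{(f-f')^2}{f'(1-f')}.
\]
Then I insert the exact expressions
\[
f-f'=\ee^{-b}\big(1-\ee^{-(a-b)}\big)\le \ee^{-b}\,[1\wedge(a-b)],
\qquad
f'(1-f')=\ee^{-b}\big(1-\ee^{-b}\big).
\]
This gives
\[
D\le [1\wedge(a-b)]^2\,\frac{\ee^{-b}}{1-\ee^{-b}}=\frac{[1\wedge(a-b)]^2}{\ee^{b}-1}\le \frac{[1\wedge(a-b)]^2}{b},
\]
using $\ee^b-1\ge b$.

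\emph{Case $a=b$.} Then $D=0=\zeta(a,b)$.

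Finally, averaging over $i<j$ with weight $\binom N2^{-1}$, and over $i$ with weight $N^{-1}$ using $g_{ii}$, yields $d^{\neq}$ and $d^{=}$. The main obstacle is the case $a>b$: a naive bound there gives only $(a-b)/b$ or loses the factor $\ee^{-b}$. The clean way through is the $\chi^2$ comparison together with the cancellation of $\ee^{-b}$ between numerator and denominator. Only boundary conventions (e.g.\ $b=0$) need separate care.
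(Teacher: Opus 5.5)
Your proof is correct and follows the paper's argument. Both use the same reduction to a single Bernoulli pair and the same case split on the sign of $a-b$. Both treat $b>a$ via nonpositivity of $f\log(f/f')$. For $a>b$ both reach the same endpoint $[1\wedge(a-b)]^2\,\ee^{-b}/(1-\ee^{-b})\le[1\wedge(a-b)]^2/b$. The only cosmetic difference is that you apply $\log x\le x-1$ to both terms (the $\chi^2$ bound), whereas the paper applies it only to the first term and bounds the exact second term $-\ee^{-a}(a-b)$ by $-\ee^{-a}[1\wedge(a-b)]$.
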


\begin{proof}
The lower bounds are trivial by Jensen's inequality and convexity of the function $x \mapsto x \log x$. To get the upper bounds, define
\[
\chi = f \log \left(\frac{f}{f'}\right) + (1-f) \log \left(\frac{1-f}{1-f'}\right) 
=  (1-\ee^{-g}) \log \frac{(1-\ee^{-g})}{(1-\ee^{-g'})} + \ee^{-g} (g'-g),
\]
where the indices $ij$ and $ii$ are suppressed to simplify the notation. Note that the two terms in the right-hand side have opposite signs. Abbreviate $U=\ee^{-g}$ and $V=\ee^{-g'}$. For $g>g'$, estimate $V-U = V(1-\frac{U}{V}) = V(1-\ee^{-(g-g')}) \leq V[1 \wedge (g-g')]$, to get from $(\ast)$ that
\[
\chi \leq [1 \wedge (g-g')]\,\left[\frac{1-U}{1-V}\,V - U\right] 
= [1 \wedge (g-g')]\,\frac{V-U}{1-V} \leq [1\wedge (g-g')]^2\,\frac{V}{1-V}.
\]
Since $\log(\frac{1}{V}) = \log(1+\frac{1-V}{V}) \leq \frac{1-V}{V}$, it follows that $\frac{V}{1-V} \leq \frac{1}{g'}$, and so $\chi  \leq [1\wedge (g-g')]^2\,\frac{1}{g'}$. For $g<g'$, use $(\ast)$ to estimate 
\[
\begin{aligned}
\chi &\leq \frac{1-U}{1-V}\,(V-U) + \ee^{-g} (g'-g) = -\frac{1-\ee^{-g}}{1-\ee^{-g'}}\,(\ee^{-g}-\ee^{-g'}) +\ee^{-g}(g'-g)\\
&= \ee^{-g} \left[-\frac{1-\ee^{-g}}{1-\ee^{-g'}}\,(1-\ee^{-(g'-g)}) + (g'-g)\right].
\end{aligned}
\]
The term in square brackets is bounded above by $g'-g$, because the first term is non-positive. Hence
\[
\chi \leq \ee^{-g}(g'-g),
\]
which is the second term in $\zeta(g,g')$.
\end{proof}


\subsection{Perturbation of relative entropy}
\label{ss.pertrelentr}

\begin{lemma}
\label{lem:entrbd2}
$|H^{\neq}(P_f \mid P_{f'}) - H^{\neq}(P_{\tilde{f}} \mid P_{f'})| \leq d^{\neq}(g,\tilde{g},g')$ and $|H^{=}(P_f \mid P_{f'}) - H^{=}(P_{\tilde{f}} \mid P_{f'})| \leq d^{=}(g,\tilde{g},g')$, where
\[
\begin{aligned}
d^{\neq}(g,\tilde{g},g') &= \binom{N}{2}^{-1} \sum_{1 \leq i < j \leq N} \zeta(g_{ij},\tilde{g}_{ij},g'_{ij}),\\
d^{=}(g,\tilde{g},g') &= N^{-1} \sum_{1 \leq i \leq N} \zeta(g_{ii},\tilde{g}_{ii},g'_{ii}),
\end{aligned}
\]
with
\[
(\heartsuit_2) \qquad 
\begin{aligned}
\zeta(a,b,c) &=  \frac{1}{c} \left(\ee^{-(a \wedge c)}\, [1 \wedge |a-c|] \wedge \ee^{-(b \wedge c)}\, [1 \wedge |b-c|]\right)\\ 
&\qquad + \ee^{-(a \wedge b)}\, \Big([1 \wedge |a-b|] + |a-c|+|b-c|\Big), \qquad a,b,c \in \R_+.
\end{aligned}
\]
\end{lemma}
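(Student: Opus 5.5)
The plan is to reduce the statement to a scalar inequality for a single Bernoulli variable and then sum. As in the proof of Lemma~\ref{lem:entrbd1}, $H^{\neq}$ and $H^{=}$ are normalised sums over $i<j$, respectively over $i$, of the per-pair quantity
\[
\chi(a,c) = (1-\ee^{-a})\log\frac{1-\ee^{-a}}{1-\ee^{-c}} + \ee^{-a}(c-a).
\]
Here $a=g_{ij}$, $c=g'_{ij}$, and for the perturbed measure $b=\tilde g_{ij}$. By the triangle inequality it then suffices to prove
\[
|\chi(a,c)-\chi(b,c)| \le \zeta(a,b,c), \qquad a,b,c\in\R_+,
\]
with $\zeta$ as in $(\heartsuit_2)$. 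The bounds for $H^{\neq}$ and $H^{=}$ then follow by averaging over $i<j$ and over $i$.

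Write $U_a=\ee^{-a}$, $U_b=\ee^{-b}$ and $V=\ee^{-c}$, and split $\chi(a,c)-\chi(b,c)$ into a \emph{logarithmic part} and a \emph{linear part}.

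\textbf{Linear part.} This is $\ee^{-a}(c-a)-\ee^{-b}(c-b)$, which I rewrite as
\[
(\ee^{-a}-\ee^{-b})(c-a) + \ee^{-b}(b-a).
\]
\begin{itemize}
\item For the first term, use $|\ee^{-a}-\ee^{-b}|\le \ee^{-(a\wedge b)}[1\wedge|a-b|]$, together with $[1\wedge|a-b|]\,|c-a|\le|a-c|$. This bounds it by $\ee^{-(a\wedge b)}|a-c|$.
\item For the second term, use $\ee^{-b}\le \ee^{-(a\wedge b)}$ and $|b-a|\le|a-c|+|b-c|$.
\end{itemize}
If convenient, one can instead keep one copy of $\ee^{-(a\wedge b)}[1\wedge|a-b|]$ explicitly. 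Either way, the linear part is dominated by the second line of $(\heartsuit_2)$.

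\textbf{Logarithmic part.} This is $F(1-U_a)-F(1-U_b)$, where $F(u)=u\log(u/(1-V))$. Since $F(1-V)=0$, I treat it as a \emph{perturbation around $c$}. Using $(\ast)$ and the elementary bound $|\log(x/y)|\le|x-y|/(x\wedge y)$, one gets
\[
|F(1-U_a)| \lesssim \frac{|U_a-V|}{1-V}.
\]
Then $|U_a-V|\le \ee^{-(a\wedge c)}[1\wedge|a-c|]$, exactly as in the proof of Lemma~\ref{lem:entrbd1}. Finally, $\frac{1}{1-V}$ is controlled by $\frac1c$, via $\log\frac1V\le\frac{1-V}{V}$ for the small-$c$ regime, with a separate bounded estimate for large $c$. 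The same argument applies with $b$ in place of $a$.

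\textbf{The minimum in $(\heartsuit_2)$.} The step I expect to be the main obstacle is producing the \emph{minimum} of the two terms in the first line of $(\heartsuit_2)$, rather than their sum. The plan is to compare the logarithmic parts directly instead of comparing each of them with $0$. Write
\[
F(1-U_a)-F(1-U_b) = \big(F(1-U_a)-F(1-U_b)\big)\mathbf 1\{\cdot\}
\]
and expand around whichever of $a,b$ is closer to $c$. Concretely, suppose without loss of generality that the $a$-term is the smaller one. Then $F(1-U_b)-F(1-U_a)$ is bounded by its value at $a$ plus $|F'|\cdot|U_a-U_b|$, where $|F'(u)|\le 1+|\log(u/(1-V))|$. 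The increment $|U_a-U_b|\le \ee^{-(a\wedge b)}[1\wedge|a-b|]$ is then absorbed into the second line of $(\heartsuit_2)$. The remaining logarithmic factor is handled with $|\log(u/(1-V))|\le|a-c|+|b-c|$ on the relevant range, using the mean value theorem applied to $x\mapsto\log(1-\ee^{-x})$.

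If this symmetric comparison turns out to lose a constant factor, I would accept a bound of the form $C\,\zeta(a,b,c)$ with a universal constant $C$. Such a constant is harmless for all later uses, since only the vanishing of $\EE\,d(g,\tilde g,g')$ is needed in Section~\ref{s.proofs}.
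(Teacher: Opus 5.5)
\textbf{The minimum cannot be obtained, because with it the statement is false.} You are right that the minimum in the first line of $(\heartsuit_2)$ is the obstacle, but your proposed fix fails. The bound $|\log(u/(1-V))|\le|a-c|+|b-c|$, for $u$ between $1-U_a$ and $1-U_b$, does not follow from the mean value theorem for $x\mapsto\log(1-\ee^{-x})$, because the derivative $1/(\ee^{x}-1)$ is unbounded as $x\downarrow 0$.

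No other argument can succeed either. Take $N=2$ (a single pair), $g_{12}=\log 2$ and $\tilde g_{12}=g'_{12}=c>0$. Then $H^{\neq}(P_{\tilde f}\mid P_{f'})=0$, so the left-hand side is the Bernoulli relative entropy
\[
\chi(\log 2,c)=\tfrac12\log\frac{1/2}{1-\ee^{-c}}+\tfrac12\,(c-\log 2)\longrightarrow\infty,\qquad c\downarrow 0 .
\]
On the right-hand side, the minimum in $\zeta(\log 2,c,c)$ vanishes, since its second entry contains the factor $1\wedge|b-c|=0$. The second line equals $\ee^{-c}(2\log 2-2c)\le 2\log 2$. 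The ratio is therefore unbounded, so your fallback of accepting $C\,\zeta(a,b,c)$ does not help. The same example with $N=1$ breaks the self-loop statement.

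\textbf{Comparison with the paper.} The paper's proof makes exactly this slip. From $(\ast)$ it derives
\[
-\frac{(V-\tilde U)^2}{1-V}\;\le\;\chi-\tilde\chi-(\tilde U-U)-\big[U(g'-g)-\tilde U(g'-\tilde g)\big]\;\le\;\frac{(V-U)^2}{1-V},
\]
and then writes $\wedge$. The middle quantity equals $\phi(U)-\phi(\tilde U)$, where $\phi(U)=(1-U)\log\frac{1-U}{1-V}-(V-U)\in\big[0,\frac{(V-U)^2}{1-V}\big]$. A difference of two nonnegative numbers is bounded only by their maximum, so only the maximum follows.

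The paper's next step also has a gap. The bound $\frac{(V-U)^2}{1-V}\le\frac{1}{g'}|V-U|$ needs $|V-U|\le V$. For $U>V$ one should instead use $\frac{(U-V)^2}{1-V}\le U-V\le \ee^{-g}[1\wedge|g-g'|]$, which is absorbed into the second line.

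The parts of your plan that compare each logarithmic term with $0$ are correct: $|F(1-U_a)|\le\frac{|U_a-V|}{1-V}\le(1+\frac1c)\,\ee^{-(a\wedge c)}[1\wedge|a-c|]$, and likewise for $b$. Together with your linear-part estimate, they prove a true version of the lemma with the sum of the two first-line terms in place of the minimum. The paper's sandwich, done correctly, gives the maximum.

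Either corrected bound keeps what the proof of Lemma~\ref{lem:approx1} actually uses:
\begin{itemize}
\item a $1/c$ blow-up as $c\downarrow 0$;
\item continuity and vanishing on the diagonal $a=b=c$;
\item exponential decay when $a,b,c$ are all large.
\end{itemize}
So you should state and prove that version, and drop the attempt to produce the minimum.
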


\begin{proof}
Again we suppress the indices $ij$ and $ii$. Define
\[
\tilde{\chi} = \tilde{f} \log \left(\frac{\tilde{f}}{f'}\right) + (1-\tilde{f}) \log \left(\frac{1-\tilde{f}}{1-f'}\right) 
=  (1-\ee^{-\tilde{g}}) \log \frac{(1-\ee^{-\tilde{g}})}{(1-\ee^{-g'})} + \ee^{-\tilde{g}} (g'-\tilde{g}).
\]
Abbreviate $\tilde{U} = \ee^{-\tilde{g}}$. Use $(\ast)$ to estimate
\[
(V-U) - (1-\tilde{U})\,\frac{V-\tilde{U}}{1-V} \leq \chi - \tilde{\chi} - [U(g'-g) - \tilde{U}(g'-\tilde{g})] 
\leq (1-U)\,\frac{V-U}{1-V} - (V-\tilde{U}).
\]
The upper bound equals $\frac{(V-U)^2}{1-V}+(\tilde{U}-U)$, the lower bound equals $-\frac{(V-\tilde{U})^2}{1-V}+(\tilde{U}-U)$. Hence
\[
|\chi - \tilde{\chi}| \leq \frac{(V-U)^2 \wedge (V-\tilde{U})^2}{1-V} + |U-\tilde{U}| + |U(g'-g) - \tilde{U}(g'-\tilde{g})|.
\]
Since $\frac{V}{1-V} \leq \frac{1}{g'}$, the first term is bounded from above by $\frac{1}{g'}[|V-U| \wedge |V-\tilde{U}|]$, where $|V-U| \leq (U \vee V)\,[1 \wedge |g-g'|]$ and $|V-\tilde{U}| \leq (\tilde{U} \vee V)\,[1 \wedge |\tilde{g}-g'|]$. Moreover, the second term is bounded from above by $(U \vee \tilde{U})\,[1 \wedge |g-\tilde{g}|]$ and the third term by $(U \vee \tilde{U})[|g-g'|+|\tilde{g}-g'|]$.
\end{proof}


\section{Proof of the invariance lemmas and the scaling theorems}
\label{s.proofs}

In this section we prove Lemmas~\ref{leminv}--\ref{lemstable} (Section~\ref{ss.invlem}), Theorem~\ref{thm1} (Section~\ref{ss.thmlight})
and Theorem~\ref{thm2} (Section~\ref{ss.thmheavy}).


\subsection{Proof of the invariance lemmas}
\label{ss.invlem}
 
\begin{proof}[Proof of Lemma \ref{leminv}]
For the case $g^{\neq} = g^{\neq}_\delta$ and $g^{=} \equiv 0$, we have for ${I^{(1)}}\ne{J^{(1)}}$,
\[
g^{\neq}_{I^{(1)},J^{(1)}}[X] = \sum_{i \in I^{(1)},\,j \in J^{(1)}} g^{\neq}_\delta(X_i,X_j)
= \sum_{i \in I^{(1)},\,j \in J^{(1)}} \delta X_iX_j = \delta \bar{X}_{I^{(1)}} \bar{X}_{J^{(1)}} 
= g_\delta(\bar{X}_{I^{(1)}},\bar{X}_{J^{(1)}})
\]
with $\bar{X}_{I^{(1)}} = \sum_{i \in I^{(1)}} X_i$ and $\bar{X}_{J^{(1)}} = \sum_{i \in J^{(1)}} X_i$, and
\[
g^=_{I^{(1)}}[X] 
= 2\sum_{ {i,\, j \in I^{(1)}} \atop {i < j } } \delta X_iX_j  +\sum_{ {i \in I^{(1)}}  } \delta X^2_i
=  \sum_{ {i,\, j \in I^{(1)}} } \delta X_iX_j 
= \delta \bar{X}_{I^{(1)}}^2 = g_\delta(\bar{X}_{I^{(1)}}), 
\]
where we use that $\bar{X}^2_{I^{(1)}} = (\sum_{i \in I^{(1)}} X_i )^2 = \sum_{ {i \in I^{(1)}}}  X_i^2+2\sum_{ {i,\, j \in I^{(1)}}, {i < j } } X_iX_j $. This settles the claim in Lemma~\ref{leminv} for $\eta=0$. Similarly, for the case $g^{=} = g^{=}_\eta$,
\[
g^{=}_{I^{(1)}}[X] 
= \sum_{i \in I^{(1)}} g^{=}_\eta(X_i)=   \sum_{ {i \in I^{(1)}} } \eta X_i =\eta \bar{X}_{I^{(1)}} 
= g_\eta(\bar{X}_{I^{(1)}}) \qquad \forall {I^{(1)}},
\]
settling the claim for $\eta>0$ as well.
\end{proof}

\begin{proof}[Proof of Lemma \ref{lemstable}]
For $\lambda\geq0$,
\[
\EE\left[
\exp\left\{
-\lambda m^{-1/\gamma}\sum_{i=1}^mY_i
\right\}
\right]
=
\prod_{i=1}^m
\EE\left[\ee^{-\lambda m^{-1/\gamma}Y_i}\right]
= \ee^{-m(\lambda m^{-1/\gamma})^\gamma}
= \ee^{-\lambda^\gamma}.
\]
Hence $m^{-1/\gamma}\sum_{i=1}^mY_i$ has the same law as $Y_1$. The graph-level invariance follows by applying Lemma~\ref{leminv}: after aggregation, the rescaled block weight has again the same stable law, while the invariant connection functions $g^{\neq}_\delta(x,y)=\delta xy$ and $g^{=}_\eta(x)=\eta x$ keep the same form. This settles the claim in Lemma \ref{lemstable}.
\end{proof}

\begin{proof}[Proof of Lemma \ref{lemER}] 
Let all the weights be equal to $w$. Under aggregation into blocks of size $m$, the summed weight of a block is $mw$, and after the deterministic rescaling by $m^{-1}$ the block weight is again $w$. For the invariant connection functions
\[
g^{\neq}_\delta(x,y) = \delta xy, \qquad g^{=}_\eta(x)=\eta x,
\]
the corresponding edge and loop densities at weight $w$ are
\[
q^{\neq} = \delta w^2, \qquad q^{=} = \eta w.
\]
Hence the associated homogeneous graph has edge probability $p^{\neq} = 1-\ee^{-q^{\neq}}$ and self-loop probability $p^{=} = 1-\ee^{-\frac12(q^{\neq}+q^{=})}$. Thus, the law of the deterministic weights is invariant under rescaling by $m^{-1}$, and the law of the resulting random graph belongs to the two-parameter family of Erd\H{o}s--R\'enyi random graphs. This settles the claim in Lemma~\ref{lemER}.
\end{proof}
 
\noindent
Note that, for $w=1$, the proof of Lemma \ref{lemER} can be seen as a particular case of the proof of Lemma~\ref{lemstable} with $\gamma=1$, because the Laplace transform of a deterministic variable $W_i$ equal to $w=1$ is $\EE[\ee^{-\lambda W_i}]=\ee^{-\lambda}$.


\subsection{Proof of the light-tailed scaling theorem}
\label{ss.thmlight}

\begin{proof}
The iteration formula in Section~\ref{ss.iter} tells us that
\[
\begin{aligned}
g^{\neq}_{I^{(\ell)},J^{(\ell)}}[X]
&=g^{\neq}(X_{I^{(\ell)}},X_{J^{(\ell)}}) = \sum_{i\in I^{(\ell)},\,j\in J^{(\ell)}} g^{\neq}(X_i,X_j), \qquad I^{(\ell)} \neq J^{(\ell)},\\
g^{=}_{I^{(\ell)}}[X]
& =g^{=}(X_{I^{(\ell)}}) = \sum_{i\in I^{(\ell)}} g^{=}(X_i).
\end{aligned}
\]
In the light-tailed regime we use the rescaled connection functions
\[
\mathfrak g^{\neq}_\ell = m^{-2\ell} g^{\neq}, \qquad \mathfrak g^{=}_\ell = m^{-\ell} g^{=}.
\]
With this rescaling included, the $\ell$-renormalised edge function is
\[
\mathfrak g^{\neq}_\ell(X_{I^{(\ell)}},X_{J^{(\ell)}})
= m^{-2\ell} \sum_{i\in I^{(\ell)},\,j\in J^{(\ell)}} g^{\neq}(X_i,X_j),
\]
while the $\ell$-renormalised self-loop function is
\[
\mathfrak g^{=}_\ell(X_{I^{(\ell)}},X_{I^{(\ell)}})
= m^{-\ell} \sum_{i\in I^{(\ell)}}g^{=}(X_i).
\]
Put
\[
q^{\neq} = \EE_X[g^{\neq}(X_1,X'_1)], \qquad q^{=} = \EE_X[g^{=}(X_1)],
\]
where $X'_1$ is an independent copy of $X_1$. By assumption $(\Box)$, $q^{\neq}$ is finite and strictly positive, while
$q^{=}$ is finite and non-negative. Hence $\frac12(q^{\neq}+q^{=})>0$. For fixed $I^{(\ell)} \neq J^{(\ell)}$, the two-sample law of large numbers gives
\[
m^{-2\ell} \sum_{i\in I^{(\ell)},\,j\in J^{(\ell)}} g^{\neq}(X_i,X_j) \longrightarrow q^{\neq} \text{ in } L^1. 
\]
Similarly, for a fixed $I^{(\ell)}$, the law of large numbers for $U$-statistics gives
\[
m^{-2\ell} \sum_{\substack{i,j\in I^{(\ell)}\\ i\neq j}} g^{\neq}(X_i,X_j) \longrightarrow q^{\neq} \text{ in } L^1, 
\]
while, by the last assumption in $(\Box)$,
\[ 
m^{-2\ell} \sum_{i\in I^{(\ell)}} g^{\neq}(X_i,X_i) \longrightarrow 0 
\]
in probability. The ordinary law of large numbers gives
\[
m^{-\ell} \sum_{i\in I^{(\ell)}} g^{=}(X_i) \longrightarrow q^{=} \text{ in } L^1. 
\]
Hence
\[
\mathfrak g^{\neq}_\ell(X_{I^{(\ell)}},X_{J^{(\ell)}}) \longrightarrow q^{\neq},
\qquad
\mathfrak g^{=}_\ell(X_{I^{(\ell)}}) \longrightarrow q^{=}
\]
in probability. 

We next estimate the specific relative entropies. By exchangeability of the level-$\ell$ blocks, it is enough to consider fixed representative blocks $I^{(\ell)},J^{(\ell)}$, with $I^{(\ell)}\neq J^{(\ell)}$, for the edge contribution, and a fixed representative block $I^{(\ell)}$ for the self-loop contribution. Recall that
\[
p^{\neq} = 1-\ee^{-q^{\neq}}, \qquad p^{=} = 1-\ee^{-\frac12(q^{\neq}+q^{=})},
\]
or
\[
-\log(1-p^{\neq}) = q^{\neq}, \qquad -\log(1-p^{=}) = \frac12(q^{\neq}+q^{=}).
\]
Using Lemma~\ref{lem:entrbd1}, we obtain
\[
\EE_X\left[S^{\neq}\big(P(G^{(\ell)}[X])\mid P(G_{\ell,p})\big)\right]
\leq c^{\neq}_\ell, \qquad \EE_X\left[S^{=}\big(P(G^{(\ell)}[X])\mid P(G_{\ell,p})\big)\right]\leq c^{=}_\ell,
\]
where
\[
\begin{aligned}
c^{\neq}_\ell &= \EE_X\left[\zeta\Big(\mathfrak g^{\neq}_\ell(X_{I^{(\ell)}},X_{J^{(\ell)}}),q^{\neq}\Big)\right],\\  
c^{=}_\ell &= \EE_X\left[\zeta\Big(\tfrac12\mathfrak g^{\neq}_\ell(X_{I^{(\ell)}},X_{I^{(\ell)}})
+ \tfrac12 \mathfrak g^{=}_\ell(X_{I^{(\ell)}}),\tfrac12(q^{\neq}+q^{=})\Big)\right].
\end{aligned}
\]
By $(\heartsuit_1)$, $\zeta(a,b) \leq (\frac1b+1)|a-b|$, $a,b>0$. Therefore
\[
c^{\neq}_\ell \leq \left(\frac1{q^{\neq}}+1\right)
\EE_X\left[\left|\mathfrak g^{\neq}_\ell(X_{I^{(\ell)}},X_{J^{(\ell)}})-q^{\neq} \right| \right]
\longrightarrow 0,
\]
and
\[
c^{=}_\ell \leq \left(\frac{2}{q^{\neq}+q^{=}}+1\right) 
\EE_X\left[\left|\tfrac12\mathfrak g^{\neq}_\ell(X_{I^{(\ell)}},X_{I^{(\ell)}})
+\tfrac12\mathfrak g^{=}_\ell(X_{I^{(\ell)}})-\tfrac12(q^{\neq}+q^{=})\right|\right]
\longrightarrow 0.
\]
The bounds do not depend on \(L\), apart from the requirement \(L\geq \ell\) ensuring that level-\(\ell\) blocks exist. Hence
\[
\lim_{\ell\to\infty} \sup_{L\geq \ell} \EE_X\left[\Delta\Big(P(G^{(\ell)}[X]),P(G_{\ell,p})\Big)\right] = 0.
\]
This settles the claim in Theorem~\ref{thm1}.
\end{proof}


\subsection{Proof of Corollary~\ref{cor:product-light}}

\begin{proof}

By assumption, 
\[ 
g^{\neq}(x,y) = B(xy)^\beta+O(1), \qquad g^{=}(x) = Cx^{\beta}+O(1). 
\] 
Since $\EE_X[X_1^\beta]<\infty$, we have 
\[ 
\EE_X[g^{\neq}(X_1,X'_1)]<\infty, \qquad  \EE_X[g^{=}(X_1)]<\infty. 
 \] 
Thus, the first two conditions in $(\Box)$ are satisfied, with 
\[ 
q^{\neq} = \EE_X[g^{\neq}(X_1,X'_1)], \qquad q^{=} = \EE_X[g^{=}(X_1)]. 
\] 
 
It remains to verify the last condition in $(\Box)$, which we do for a generic level-$\ell$ block $I^{(\ell)}$ of $k \equiv m^{\ell}$ microscopic vertices as $\ell\to\infty$. Put $W_i=X_i^\beta$. Then $\EE_X[W_1]<\infty$. For a fixed level-$\ell$ block $I^{(\ell)}$,
\[
m^{-2\ell} \sum_{i\in I^{(\ell)}}X_i^{2\beta} = m^{-2\ell} \sum_{i\in I^{(\ell)}}W_i^2 
\leq \left( m^{-\ell}\max_{i\in I^{(\ell)}}W_i \right) \left( m^{-\ell}\sum_{i\in I^{(\ell)}}W_i \right).
\] 
By the law of large numbers, 
\[ 
m^{-\ell}\sum_{i\in I^{(\ell)}}W_i \longrightarrow \EE_X[W_1], \qquad m \to \infty, 
\] 
in probability. Moreover, 
\[ 
m^{-\ell} \max_{i\in I^{(\ell)}} W_i \longrightarrow 0, \qquad m \to \infty, 
\] 
in probability. Indeed, for every $\varepsilon>0$,  
\[ 
\PP\left( m^{-\ell}\max_{i\in I^{(\ell)}}W_i>\varepsilon \right) 
\leq m^\ell\,\PP(W_1>\varepsilon m^\ell) \longrightarrow 0, \qquad m \to \infty,
\] 
because $\EE_X[W_1]<\infty$ implies 
\[
t\,\PP(W_1>t)\longrightarrow 0, \qquad t\to\infty. 
\] 
Therefore 
\[ 
m^{-2\ell} \sum_{i\in I^{(\ell)}}X_i^{2\beta} \longrightarrow 0 
\] 
in probability.

The $O(1)$-term in $g^{\neq}(x,x)$ contributes at most $O(m^{\ell})$, and hence also vanishes. Thus, 
\[ 
m^{-2\ell} \sum_{i\in I^{(\ell)}} g^{\neq}(X_i,X_i) \longrightarrow 0 
\] 
in probability. Hence Theorem~\ref{thm1} applies. 

Finally, if 
\[ 
\PP(X_1>x)\sim Ax^{-\alpha}, \qquad x\to\infty, 
\] 
then $\EE_X[X_1^\beta]<\infty$ precisely when $\alpha>\beta$. This proves the final claim. 
\end{proof}


\subsection{Proof of the heavy-tailed scaling theorem}
\label{ss.thmheavy}

\begin{proof}
The proof proceeds via two key lemmas (Lemmas~\ref{lem:approx1}--\ref{lem:approx2} below). Along the way we use some standard facts listed in Appendix~\ref{App}. Throughout the proof, $(\blacksquare)$ is in force. The key parameters are $\gamma = \frac{\alpha}{\beta} \in (0,1)$, $\delta = B(A\Gamma(1-\gamma))^{2/\gamma}$ and $\eta = C(A\Gamma(1-\gamma))^{1/\gamma}$.
 
Define weights $\hat{X}^{(\ell)} = \{\hat{X}^{(\ell)}_I\}_{I \in [N_{L-\ell}]}$ by putting
\[
\hat{X}^{(\ell)}_I = (m^{\ell})^{-1/\gamma} \sum_{i \in I} X_i^\beta,
\]
and $\hat{Y} = \{Y_I\}_{I \in [N_{L-\ell}]}$ by picking i.i.d.\ copies of $Y_1$, the standard $\gamma$-stable law random variable. In Appendix~\ref{App} we show how the weights $X,Y,\hat X,\hat Y$ can be coupled via a sequence $U$ of i.i.d.\ $\mathrm{UNIF}[0,1]$ random variables, which we suppress from the notation.  

\begin{itemize}
\item
Let $\hat{G}^{(\ell)}[\hat{X}]$ denote the inhomogeneous random graph with invariant connection functions $g^{\neq}_\delta$, $g^{=}_\eta$ and weights $\hat{X}^{(\ell)}$. From Lemma~\ref{lem:ht.coupling} we know that the connection probabilities in $\hat{G}^{(\ell)}[\hat{X}]$ are \emph{almost the same} as those in $G^{(\ell)}[X]$, the original graph after $\ell$ renormalisations.
\item
Let $\hat{G}_{\ell,\gamma,\delta,\eta}[\hat{Y}]$ denote the inhomogeneous random graph with invariant connection functions $g^{\neq}_\delta$, $g^{=}_\eta$ and weights $\hat{Y}$. From Lemma~\ref{lem:ht.graph-laws} we know that the connection probabilities in $\hat{G}_{\ell,\gamma,\delta,\eta}[\hat{Y}]$ are \emph{the same} as those in $G_{\ell,\gamma,\delta,\eta}[Y]$, the limiting graph in Theorem~\ref{thm2}.  
\end{itemize} 
Our first key lemma shows that $\hat{G}^{(\ell)}[\hat{X}(U)]$ can be approximated by $G^{(\ell)}[X(U)]$ as $\ell\to\infty$.

\begin{lemma}
\label{lem:approx1}
Uniformly in $L \geq \ell$,
\[
\begin{aligned}
&\lim_{\ell\to\infty}
\EE_U\bigg[\bigg|S^{\neq}\Big(P(G^{(\ell)}[X]) ~\Big|~ P(G_{\ell,\gamma,\delta,\eta}[Y])\Big)\\[-0.2cm]
&\qquad\qquad\qquad - S^{\neq}\Big(P(\hat{G}^{(\ell)}[\hat{X}]) ~\Big|~ 
P(\hat{G}_{\ell,\gamma,\delta,\eta}[\hat{Y}])\Big)\bigg|\bigg] = 0,\\
&\lim_{\ell\to\infty}
\EE_U\bigg[\bigg|S^{=}\Big(P(G^{(\ell)}[X]) ~\Big|~ P(G_{\ell,\gamma,\delta,\eta}[Y])\Big)\\[-0.2cm]
&\qquad\qquad\qquad - S^{=}\Big(P(\hat{G}^{(\ell)}[\hat{X}]) ~\Big|~P(\hat{G}_{\ell,\gamma,\delta,\eta}[\hat{Y}])\Big)\bigg|\bigg] = 0.
\end{aligned}
\]
\end{lemma}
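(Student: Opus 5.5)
Since Lemma~\ref{lem:ht.graph-laws} identifies the connection probabilities of $\hat{G}_{\ell,\gamma,\delta,\eta}[\hat Y]$ with those of $G_{\ell,\gamma,\delta,\eta}[Y]$, the reference measure is the same in both terms of each difference. Writing $g'$ for its intensities ($g'_{IJ}=\delta Y_IY_J$ off the diagonal, $g'_{II}=\frac12(\delta Y_I^2+\eta Y_I)$ on it), each difference has the form $|H(P_f\mid P_{f'})-H(P_{\tilde f}\mid P_{f'})|$. Here $g$ collects the level-$\ell$ intensities of $G^{(\ell)}[X]$ and $\tilde g$ those of $\hat G^{(\ell)}[\hat X]$, namely $\tilde g_{IJ}=\delta\hat X_I\hat X_J$ and $\tilde g_{II}=\frac12(\delta\hat X_I^2+\eta\hat X_I)$. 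By Lemma~\ref{lem:entrbd2}, the edge difference is bounded by $d^{\neq}(g,\tilde g,g')$ and the loop difference by $d^{=}(g,\tilde g,g')$. Since the level-$\ell$ blocks are exchangeable and the coupling is built blockwise from i.i.d.\ uniforms, $\EE_U$ of these averages reduces to $\EE_U[\zeta(g_{IJ},\tilde g_{IJ},g'_{IJ})]$ for one fixed pair $I\neq J$ and $\EE_U[\zeta(g_{II},\tilde g_{II},g'_{II})]$ for one fixed block. This is independent of $L\ge\ell$, so uniformity in $L$ is automatic. It therefore suffices to show that these two expectations tend to $0$.

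Next I control the inputs. From $(\blacksquare)$, $g_{IJ}=(m^{2\ell})^{-1/\gamma}\sum_{i\in I,j\in J}[B X_i^\beta X_j^\beta+O(1)]$, so $|g_{IJ}-(B/\delta)\,\delta \hat X_I\hat X_J|\le K^{\neq}m^{2\ell(1-1/\gamma)}\to0$ deterministically, because $\gamma<1$. Here I expect the constant to match by design: with the normalisation $\hat X$ in the paper one must absorb the factor $(A\Gamma(1-\gamma))^{1/\gamma}$, and I will take $\hat X_I$ to be the rescaled sum appearing in Lemma~\ref{lem:ht.coupling}, so that $|g_{IJ}-\tilde g_{IJ}|\to0$. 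The diagonal works the same way: the loop intensity is $\frac12(m^{2\ell})^{-1/\gamma}\sum_{i,j\in I}g^{\neq}(X_i,X_j)+\frac12(m^\ell)^{-1/\gamma}\sum_{i\in I}g^=(X_i)$ (including the $i=j$ terms, which is how $g^{\neq}(x,x)$ enters $f^=$), and this is $\tilde g_{II}$ up to errors $O(m^{\ell(2-2/\gamma)})+O(m^{\ell(1-1/\gamma)})$. So $|g-\tilde g|\to0$ uniformly and deterministically. By the coupling of Appendix~\ref{App} (a Skorokhod-type coupling of the normalised sums with the stable limit via quantile transforms, using absolute continuity), $\hat X_I\to Y_I$ a.s.\ or at least in probability, hence $|\tilde g-g'|\to0$ and $|g-g'|\to0$ in probability.

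Finally I pass to expectations in $(\heartsuit_2)$, and this is the main obstacle, since neither $1/c$ nor $|a-c|+|b-c|$ is bounded. For the second term, $\ee^{-(a\wedge b)}(|a-c|+|b-c|)$, I split on whether $a\wedge b\le c/2$. If it is, then $|a-c|\ge c/2$ or $|b-c|\ge c/2$ while $a-b\to0$, so this event has vanishing probability, and on it the term is at most $(\text{stuff})\ee^{-c/2}$ up to $o(1)$. Otherwise the term is bounded by $\ee^{-c/2}\cdot$(a quantity tending to 0 in probability), and $x\ee^{-x/2}$ is bounded, which gives uniform integrability. For the first term, the minimum with $[1\wedge|a-c|]\le1$ bounds it by $\frac1c\wedge\frac{1}{c}[1\wedge|b-c|]$. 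On $\{c\ge\varepsilon\}$ this is at most $\varepsilon^{-1}[1\wedge|b-c|]\to0$ in $L^1$ by bounded convergence. On $\{c<\varepsilon\}$ I need $\EE[c^{-1}1_{\{c<\varepsilon\}}]\to0$ as $\varepsilon\downarrow0$. Since $c=\delta Y_IY_J$ and the stable law has an extremely thin lower tail, $\PP(Y\le x)=o(1)\ee^{-x^{-\gamma}}$, all negative moments of $Y$ are finite, so this holds. Letting $\ell\to\infty$ and then $\varepsilon\downarrow0$ finishes both limits. The delicate points are the constant-matching in the definition of $\hat X$ and making sure the coupling yields convergence in probability jointly for two independent blocks; both are handled by Lemma~\ref{lem:ht.coupling} and the appendix.
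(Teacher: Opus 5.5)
Your overall route is the paper's. You use Lemma~\ref{lem:entrbd2} with a common reference intensity, reduce by exchangeability to one pair $I\neq J$ and one block (so uniformity in $L$ is free), and have the deterministic error $K^{\neq}m^{2\ell(1-1/\gamma)}$ and its diagonal analogue. You then get a.s.\ convergence $\tilde g-g'\to0$ from the quantile coupling, and use finite negative moments of the stable law for the $1/c$ part of $(\heartsuit_2)$. All of that is fine.

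The step that fails is the case $a\wedge b\le c/2$ in the second term of $(\heartsuit_2)$. There $\ee^{-(a\wedge b)}\ge \ee^{-c/2}$, so your bound by (stuff)$\,\ee^{-c/2}$ goes the wrong way. The term $\ee^{-(a\wedge b)}(|a-c|+|b-c|)$ is only bounded by about $2c$, and it really is of order $c$ when $a\wedge b=O(1)$ while $c$ is large. Now $c=\delta\hat Y_I\hat Y_J$, resp.\ $\frac12(\delta\hat Y_I^2+\eta\hat Y_I)$, has infinite mean for $\gamma<1$. So $\PP_U(a\wedge b\le c/2)\to0$ does not imply $\EE_U[c\,1_{\{a\wedge b\le c/2\}}]\to0$, and no convergence in probability of the coupled weights can give it. The paper's own argument has the same hole on $E_{\ell,M}^{\mathrm c}\cap\{w_\ell>R\}$. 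It invokes the exponential factors in $(\heartsuit_2)$, but the relevant factor there is $\ee^{-(u_\ell\wedge v_\ell)}$. That factor is small only when $v_\ell$ is large, which is exactly what may fail off $E_{\ell,M}$. Following the paper will not close this step.

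What is missing is tail control of the coupling that is uniform in $\ell$.

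\emph{Loops.} This is available. The one-big-jump bound $\PP(\hat X^{(\ell)}_I>x)\ge 1-(1-\PP(X_1^\beta>xm^{\ell/\gamma}))^{m^\ell}$, together with $\PP(X_1^\beta>x)\sim Ax^{-\gamma}$, gives the following: for every $\kappa<1$ there is $u_0<1$ with $F_\ell^{\leftarrow}(u)\ge\kappa F^{\leftarrow}(u)$ for all $u\in[u_0,1)$ and all $\ell$. This confines the bad event to $\{U_I<u_0\}$, where $c$ is bounded, and bounded convergence then applies.

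\emph{Edges.} This is not enough. A block with tiny $\hat X^{(\ell)}_I$ paired with a block with huge $\hat Y_J$ still contributes about $c$. Integrating out $\hat Y_J$ via $\EE[\hat Y\ee^{-t\hat Y}]\le\gamma t^{\gamma-1}$ leaves a term like $\EE_U[(\hat X^{(\ell)}_I)^{\gamma-1}1_{\mathrm{bad}}]$. So along these lines you need uniform integrability of $(\hat X^{(\ell)}_I)^{\gamma-1}$, i.e.\ lower-tail information on $X_1$ that absolute continuity does not supply. For example, if $\PP(X_1^\beta\le z)\sim 1/\log(1/z)$ as $z\downarrow0$, every negative moment of every block sum is infinite.

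Two smaller repairs:
\begin{enumerate}
\item The block sum of Lemma~\ref{lem:ht.coupling} converges to $\sigma_\gamma\hat Y_I$, not $\hat Y_I$. With $\delta,\eta$ in $\tilde g$ you must therefore divide it by $\sigma_\gamma$, or keep it and use $B,C$ as the paper does.
\item In the case $a\wedge b>c/2$, boundedness of the term needs $\ee^{-(a\wedge b)}\le \ee^{\rho_\ell}\ee^{-b}$ when $b\ge c$, since $\ee^{-c/2}|b-c|$ by itself is unbounded.
\end{enumerate}
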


\begin{proof}
For $L\geq \ell$, write $[N_{L-\ell}]$ for the set of level-$\ell$ vertices. For $I,J\in [N_{L-\ell}]$, define
\[
a_\ell^{\neq}(I,J) = m^{-2\ell/\gamma} \sum_{i\in I,\,j\in J} g^{\neq}(X_i,X_j),
\qquad
b_\ell^{\neq}(I,J) = B\hat X_I^{(\ell)}\hat X_J^{(\ell)}.
\]
For $I\in [N_{L-\ell}]$, define the genuine self-loop contribution
\[
a_\ell^{=}(I) = m^{-\ell/\gamma}\sum_{i\in I} g^{=}(X_i),
\qquad
b_\ell^{=}(I) = C\hat X_I^{(\ell)}.
\]
Recall that, under the present parametrisation,
\[
f^{=}(x) = 1-\ee^{-\tfrac12[g^{\neq}(x,x)+g^{=}(x)]}.
\]
Consequently, the Bernoulli density of a self-loop at a level-$\ell$ block $I$ is given by the following
\[
A_\ell^{=}(I) = \tfrac12\big[a_\ell^{\neq}(I,I)+a_\ell^{=}(I)\big].
\]
Similarly, for the approximating graph and the limiting stable-law graph, put
\[
B_\ell^{=}(I) = \tfrac12\big[b_\ell^{\neq}(I,I)+b_\ell^{=}(I)\big],
\]
and
\[
C_\ell^{=}(I) = \tfrac12\big[c_\ell^{\neq}(I,I)+c_\ell^{=}(I)\big],
\]
where
\[
c_\ell^{\neq}(I,J) = \delta\hat Y_I\hat Y_J, \qquad c_\ell^{=}(I) = \eta\hat Y_I.
\]
Thus
\[
C_\ell^{=}(I) = \tfrac12\big[\delta(\hat Y_I)^2+\eta\hat Y_I\big].
\]

By $(\blacksquare)$, uniformly in $I,J$,
\[
\begin{aligned}
\big|a_\ell^{\neq}(I,J)-b_\ell^{\neq}(I,J)\big|
&\leq m^{-2\ell/\gamma} \sum_{i\in I,\,j\in J} \big|g^{\neq}(X_i,X_j)-BX_i^\beta X_j^\beta\big|\\
&\leq K^{\neq}m^{2\ell(1-1/\gamma)} =: r_\ell^{\neq}.
\end{aligned}
\]
Since $\gamma<1$, $r_\ell^{\neq}\to0$. Similarly,
\[
\begin{aligned}
\big|a_\ell^{=}(I)-b_\ell^{=}(I)\big|
&\leq m^{-\ell/\gamma} \sum_{i\in I} \big|g^{=}(X_i,X_i)-CX_i^\beta\big|\\
&\leq K^{=}m^{\ell(1-1/\gamma)} =: r_\ell^{=},
\end{aligned}
\]
and $r_\ell^{=} \to 0$. Hence
\[
(\sharp_1)\qquad
\sup_{I,J}
\big|a_\ell^{\neq}(I,J)-b_\ell^{\neq}(I,J)\big|
\leq r_\ell^{\neq}\longrightarrow0,
\]
and
\[
(\sharp_2)\qquad
\sup_I
\big|a_\ell^{=}(I)-b_\ell^{=}(I)\big|
\leq r_\ell^{=}\longrightarrow0.
\]
In particular,
\[
(\sharp_2')
\qquad
\sup_I\big|A_\ell^{=}(I)-B_\ell^{=}(I)\big|
\leq \tfrac12(r_\ell^{\neq}+r_\ell^{=})
=: \rho_\ell^{=}\longrightarrow0.
\]

For fixed representative blocks $I,J$ with $I\neq J$, abbreviate
\[
a_\ell^{\neq} = a_\ell^{\neq}(I,J),
\qquad
b_\ell^{\neq} = b_\ell^{\neq}(I,J),
\qquad
c_\ell^{\neq} = c_\ell^{\neq}(I,J).
\]
For a fixed representative block $I$, abbreviate
\[
A_\ell^{=} = A_\ell^{=}(I),
\qquad
B_\ell^{=} = B_\ell^{=}(I),
\qquad
C_\ell^{=} = C_\ell^{=}(I).
\]
By Lemma~\ref{lem:ht.coupling}(iii),
\[
(\sharp_3)\qquad
b_\ell^{\neq}-c_\ell^{\neq} \longrightarrow 0 \quad\text{a.s.}
\]
and also
\[
b_\ell^{\neq}(I,I)-c_\ell^{\neq}(I,I) \longrightarrow 0,
\qquad
b_\ell^{=}(I)-c_\ell^{=}(I) \longrightarrow 0
\quad\text{a.s.}
\]
Therefore
\[
(\sharp_3')\qquad
B_\ell^{=}-C_\ell^{=} \longrightarrow 0
\quad\text{a.s.}
\]
Combining $(\sharp_1)$--$(\sharp_3')$, we get
\[
(\sharp_4)\qquad
a_\ell^{\neq}-c_\ell^{\neq} \longrightarrow 0
\quad\text{a.s.},
\qquad
A_\ell^{=}-C_\ell^{=} \longrightarrow 0
\quad\text{a.s.}
\]

By Lemma~\ref{lem:entrbd2}, for edges,
\[
\begin{aligned}
&\Big|S^{\neq}\big(P(G^{(\ell)}[X]) \mid P(G_{\ell,\gamma,\delta,\eta}[Y])\big)
- S^{\neq}\big(P(\hat G^{(\ell)}[\hat X])
\mid P(\hat G_{\ell,\gamma,\delta,\eta}[\hat Y])\big)\Big|\\
&\qquad\leq d^{\neq}(a_\ell^{\neq},b_\ell^{\neq},c_\ell^{\neq}).
\end{aligned}
\]
For self-loops, the same lemma must be applied to the actual Bernoulli self-loop densities. Hence
\[
\begin{aligned}
&\Big|S^{=}\big(P(G^{(\ell)}[X]) \mid P(G_{\ell,\gamma,\delta,\eta}[Y])\big)
- S^{=}\big(P(\hat G^{(\ell)}[\hat X])
\mid P(\hat G_{\ell,\gamma,\delta,\eta}[\hat Y])\big)\Big|\\
&\qquad\leq d^{=}(A_\ell^{=},B_\ell^{=},C_\ell^{=}).
\end{aligned}
\]
Taking expectation with respect to $U$ and using exchangeability of the level-$\ell$ vertices, we see that it is enough to prove
\[
\EE_U\big[\zeta(a_\ell^{\neq},b_\ell^{\neq},c_\ell^{\neq})\big] \longrightarrow 0
\]
and
\[
\EE_U\big[\zeta(A_\ell^{=},B_\ell^{=},C_\ell^{=})\big] \longrightarrow 0.
\]

We prove a general estimate. Let $(u_\ell,v_\ell,w_\ell)$ denote either $(a_\ell^{\neq},b_\ell^{\neq},c_\ell^{\neq})$ or $(A_\ell^{=},B_\ell^{=},C_\ell^{=})$. In both cases,
\[
|u_\ell-v_\ell| \leq \rho_\ell, \qquad \rho_\ell \to 0,
\]
with $\rho_\ell = r_\ell^{\neq}$ for edges and $\rho_\ell = \rho_\ell^{=}$ for loops, while
\[
v_\ell-w_\ell \longrightarrow 0
\quad\text{a.s.}
\]
We show that
\[
\EE_U[\zeta(u_\ell,v_\ell,w_\ell)] \longrightarrow 0.
\]

Fix $\varepsilon\in(0,1)$ and $M>1$.

\smallskip\noindent
$\bullet$ \underline{On the event $\{w_\ell\leq\varepsilon\}$.}
Since $|u_\ell-v_\ell|\leq\rho_\ell$ and $\rho_\ell\leq1$ for all large $\ell$, the bound $(\heartsuit_2)$ gives, after increasing the constant if necessary,
\[
\zeta(u_\ell,v_\ell,w_\ell)\leq C\left(w_\ell^{-1}+1\right)
\qquad\text{on } \{w_\ell\leq\varepsilon\}.
\]
Therefore
\[
\EE_U\big[\zeta(u_\ell,v_\ell,w_\ell)1_{\{w_\ell\leq\varepsilon\}}\big]
\leq
C\,\EE_U\big[w_\ell^{-1}1_{\{w_\ell\leq\varepsilon\}}\big]
+ C\,\PP_U(w_\ell\leq\varepsilon).
\]
For the edge term, $w_\ell$ has the law of $\delta Y_1Y_2$. For the loop term, $w_\ell$ has the law of
\[
\tfrac12\left(\delta Y_1^2+\eta Y_1\right).
\]
In both cases Lemma~\ref{lem:ht.negmom} gives a finite negative first moment. Since $w_\ell>0$ a.s., dominated convergence yields
\[
\lim_{\varepsilon\downarrow0}\,
\sup_\ell \EE_U\big[ \zeta(u_\ell,v_\ell,w_\ell)1_{\{w_\ell\leq\varepsilon\}}\big] = 0.
\]

\smallskip\noindent
$\bullet$ \underline{On the event $\{\varepsilon<w_\ell\leq M\}$.}
On this event $w_\ell$ stays in a compact subinterval of $(0,\infty)$. Moreover $|u_\ell-v_\ell|\leq\rho_\ell\to0$ and $v_\ell-w_\ell\to0$ a.s. Thus $(u_\ell,v_\ell,w_\ell)$ converges a.s. to the diagonal $\{(x,x,x)\colon\,x\in[\varepsilon,M]\}$. The function $\zeta$ is continuous near this diagonal and vanishes on it. The same deterministic bound $|u_\ell-v_\ell|\leq\rho_\ell$ gives a uniform bound on $\zeta$ on this event. Hence bounded convergence gives
\[
\EE_U\big[\zeta(u_\ell,v_\ell,w_\ell)1_{\{\varepsilon<w_\ell\leq M\}}\big] \longrightarrow 0.
\]

\smallskip\noindent
$\bullet$ \underline{On the event $\{w_\ell>M\}$.}
Define
\[
E_{\ell,M} = \left\{
|v_\ell-w_\ell|\leq \tfrac14 w_\ell,\,
\rho_\ell\leq \tfrac14 w_\ell
\right\}.
\]
On $E_{\ell,M}\cap\{w_\ell>M\}$, we have
\[
\tfrac34w_\ell\leq v_\ell\leq\tfrac54w_\ell,
\qquad
\tfrac12w_\ell\leq u_\ell\leq\tfrac32w_\ell,
\]
and therefore
\[
u_\ell\wedge w_\ell\geq\tfrac12w_\ell,
\qquad
v_\ell\wedge w_\ell\geq\tfrac34w_\ell,
\qquad
u_\ell\wedge v_\ell\geq\tfrac12w_\ell.
\]
Also,
\[
|u_\ell-w_\ell|\leq\tfrac12w_\ell,
\qquad
|v_\ell-w_\ell|\leq\tfrac14w_\ell,
\qquad
|u_\ell-v_\ell|\leq\tfrac14w_\ell.
\]
Substitution into $(\heartsuit_2)$ gives
\[
\zeta(u_\ell,v_\ell,w_\ell) \leq C(1+w_\ell)\ee^{-w_\ell/2}
\qquad \text{on }E_{\ell,M}\cap\{w_\ell>M\},
\]
where $C<\infty$ is independent of $\ell$ and $M$. Hence, for $M \geq 2$,
\[
\zeta(u_\ell,v_\ell,w_\ell) \leq C(1+M)\ee^{-M/2} \qquad
\text{on }E_{\ell,M}\cap\{w_\ell>M\}.
\]

It remains to control the complement of $E_{\ell,M}$. Since $v_\ell-w_\ell\to0$ a.s.\ and $\rho_\ell\to0$, for fixed $M$,
\[
\PP_U\big(E_{\ell,M}^{\mathrm c}\cap\{w_\ell>M\}\big) \longrightarrow 0.
\]
To pass from this probability estimate to an expectation estimate, first fix $R>M$ and decompose
\[
E_{\ell,M}^{\mathrm c}\cap\{w_\ell>M\}
= \Big(E_{\ell,M}^{\mathrm c}\cap\{M<w_\ell\leq R\}\Big)
\cup \Big(E_{\ell,M}^{\mathrm c}\cap\{w_\ell>R\}\Big).
\]
On the first event, using again $|u_\ell-v_\ell|\leq\rho_\ell$, we see that the function $\zeta(u_\ell,v_\ell,w_\ell)$ is bounded by a constant depending only on $R$ for all large $\ell$. Hence its contribution vanishes as $\ell\to\infty$. On the second event, the exponential factors in $(\heartsuit_2)$, together with the bound $|u_\ell-v_\ell|\leq\rho_\ell$, give an upper bound that can be
made arbitrarily small by choosing $R$ large and then letting $\ell\to\infty$. Consequently,
\[
\lim_{\ell\to\infty} \EE_U\big[\zeta(u_\ell,v_\ell,w_\ell)\,1_{E_{\ell,M}^{\mathrm c}\cap\{w_\ell>M\}}\big] = 0.
\]
Combining the last display with the estimate on $E_{\ell,M}\cap\{w_\ell>M\}$, we obtain
\[
\limsup_{\ell\to\infty} \EE_U\big[\zeta(u_\ell,v_\ell,w_\ell)1_{\{w_\ell>M\}}\big]
\leq C(1+M)\ee^{-M/2}.
\]
Letting $M\to\infty$, we see shows that the contribution from $\{w_\ell>M\}$ vanishes.

Combining the three regions, we get
\[
\EE_U[\zeta(u_\ell,v_\ell,w_\ell)] \longrightarrow 0.
\]
Applying this first to $(u_\ell,v_\ell,w_\ell) = (a_\ell^{\neq},b_\ell^{\neq},c_\ell^{\neq})$ and then to $(u_\ell,v_\ell,w_\ell) = (A_\ell^{=},B_\ell^{=},C_\ell^{=})$ we have settled both the edge and the self-loop estimates.
\end{proof}

Our second key lemma shows that the specific relative entropy of $P(\hat{G}^{(\ell)}[\hat X(U)])$ with respect to $P(\hat{G}_{\ell,\gamma,\delta,\eta}[\hat Y(U)])$ vanishes as \(\ell\to\infty\) under the coupling.

\begin{lemma}
\label{lem:approx2}
Uniformly in \(L\geq\ell\),
\[
\lim_{\ell\to\infty}
\EE_U\left[
S^{\neq}\Big(P(\hat{G}^{(\ell)}[\hat X])~\Big|~P(\hat{G}_{\ell,\gamma,\delta,\eta}[\hat Y])\Big)
\right]=0,
\]
and
\[
\lim_{\ell\to\infty}
\EE_U\left[S^{=}\Big(P(\hat{G}^{(\ell)}[\hat X])~\Big|~P(\hat{G}_{\ell,\gamma,\delta,\eta}[\hat Y])\Big)
\right]=0.
\]
\end{lemma}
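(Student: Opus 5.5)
We reduce Lemma~\ref{lem:approx2} to a single representative pair of blocks and a single representative block, using Lemma~\ref{lem:entrbd1}. Both graphs $\hat G^{(\ell)}[\hat X]$ and $\hat G_{\ell,\gamma,\delta,\eta}[\hat Y]$ have independent edges and self-loops. Their connection intensities are
\[
v_\ell^{\neq}(I,J) = B\hat X_I^{(\ell)}\hat X_J^{(\ell)}, \qquad w_\ell^{\neq}(I,J) = \delta \hat Y_I\hat Y_J
\]
for edges, and
\[
V_\ell^{=}(I) = \tfrac12\big[B(\hat X_I^{(\ell)})^2 + C\hat X_I^{(\ell)}\big], \qquad W_\ell^{=}(I) = \tfrac12\big[\delta \hat Y_I^2+\eta\hat Y_I\big]
\]
for self-loops (these are $b_\ell$, $c_\ell$, $B_\ell^{=}$, $C_\ell^{=}$ from the proof of Lemma~\ref{lem:approx1}). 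Lemma~\ref{lem:entrbd1} bounds $S^{\neq}$ by $d^{\neq}(v,w)$ and $S^{=}$ by $d^{=}(V,W)$. The pairs $(\hat X_I,\hat Y_I)$ are i.i.d.\ over blocks under the coupling, so taking $\EE_U$ and using exchangeability gives
\[
\EE_U[S^{\neq}] \leq \EE_U[\zeta(v_\ell^{\neq},w_\ell^{\neq})], \qquad \EE_U[S^{=}] \leq \EE_U[\zeta(V_\ell^{=},W_\ell^{=})].
\]
Here $\zeta$ is as in $(\heartsuit_1)$, and $(I,J)$ is a fixed pair of distinct blocks. Neither bound depends on $L$, which gives the uniformity in $L\geq\ell$. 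It remains to show that both expectations vanish.

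For this, we use Lemma~\ref{lem:ht.coupling}(iii), which gives $v_\ell-w_\ell\to0$ and $V_\ell-W_\ell\to0$ a.s. The law of $w_\ell$ (respectively $W_\ell$) does not depend on $\ell$: it is that of $\delta Y_1Y_2$ (respectively $\tfrac12(\delta Y_1^2+\eta Y_1)$). This law is a.s.\ positive and has a finite negative first moment by Lemma~\ref{lem:ht.negmom}. We then split the expectation according to the size of $w$, much as in the proof of Lemma~\ref{lem:approx1} but now with the two-argument $\zeta$.

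\textbf{On $\{w\leq\varepsilon\}$.} We use $\zeta(a,b)\leq b^{-1}+\ee^{-a}(b-a)1_{\{b>a\}}\leq b^{-1}+b$. Uniform integrability of $w^{-1}1_{\{w\leq\varepsilon\}}$, which follows from the fixed law, makes this contribution small uniformly in $\ell$ as $\varepsilon\downarrow0$.

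\textbf{On $\{\varepsilon<w\leq M\}$.} Here $\zeta(v,w)\leq \varepsilon^{-1}+M$ is bounded, and $\zeta(v,w)\to0$ a.s.\ because $\zeta(a,b)\leq(b^{-1}+1)|a-b|$. Bounded convergence applies.

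\textbf{On $\{w>M\}$.} The first term of $\zeta$ is at most $M^{-1}$. The second term is at most $\ee^{-v}w\,1_{\{v<w\}}$. On $\{v\geq w/2\}$ this is at most $w\ee^{-w/2}\leq \sup_{t>M}t\ee^{-t/2}$. On $\{v<w/2\}$ it is at most $w$, and the event has probability tending to zero. Since $w$ has a fixed law, $\EE_U[w\,1_{\{v<w/2,\,w>M\}}]\to0$ whenever $w$ is integrable.

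\textbf{Main obstacle.} $\EE[Y_1]=\infty$ for a $\gamma$-stable law with $\gamma<1$, so the last step fails as written; the region $w>M$ with $v\ll w$ is where the real work lies. There are two ways to handle it.

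\emph{Symmetrize the roles.} On $\{v<w\}$, use a refined bound. From the proof of Lemma~\ref{lem:entrbd1}, for $g<g'$,
\[
\chi\leq \ee^{-g}(g'-g)-\ee^{-g}\,\frac{1-\ee^{-g}}{1-\ee^{-g'}}\big(1-\ee^{-(g'-g)}\big),
\]
and one checks directly that $\chi$ is bounded by the Bernoulli relative entropy $\leq \log\frac{1}{1-f'}\cdot\ldots$, i.e.\ at most $\ee^{-v}w$. We then need $\EE_U[\ee^{-v}w\,1_{\{w>M\}}]\to0$, which means controlling the joint event where $\hat X_I$ is small while $\hat Y_I$ is large.

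\emph{Use the coupling.} The coupling in Appendix~\ref{App} is built monotonically from the same uniforms: $\hat X_I$ and $\hat Y_I$ are quantile transforms of the same $U$. If so, $\hat X_I/\hat Y_I\to1$ a.s., and moreover, for large $\hat Y_I$, $\hat X_I\geq \hat Y_I/2$ except on an event whose probability decays fast enough that $\EE[w\,1_{\{v<w/2,\,w>M\}}]$ is controlled, for example via a truncated-mean estimate $\EE[Y_1 1_{\{Y_1\leq t\}}]\asymp t^{1-\gamma}$ combined with the probability bound.

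I would try the monotone quantile coupling first. The key estimate is $\PP_U(\hat X_I<\hat Y_I/2,\ \hat Y_I>t)=o(t^{-1})$ uniformly in $\ell$, which should follow because the tails of $\hat X_I$ converge to stable tails uniformly (Appendix~\ref{App}). Putting the three regions together and letting $\ell\to\infty$, then $\varepsilon\downarrow0$ and $M\to\infty$, yields both limits.
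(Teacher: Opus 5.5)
The first part of your proposal is sound. The reduction via Lemma~\ref{lem:entrbd1} and exchangeability works, and so does your treatment of $\{w\le\varepsilon\}$ and $\{\varepsilon<w\le M\}$; both follow the paper's three-region scheme. Your bound $\zeta(a,b)\le b^{-1}+b$ is also cleaner than the paper's $a/b+b$, which needs joint control of $a$ and $b$.

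The gap is the region $\{w>M\}$, exactly where you put it. There $\zeta(v,w)$ is essentially $\ee^{-v}w$. Since $\EE_U[w]=\infty$, knowing $\PP_U(v<w/2)\to0$ gives nothing. Neither of your suggested fixes is carried out, and the key estimate you name is not the right one:

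\begin{itemize}
\item Appendix~\ref{App} gives only a.e.\ quantile convergence, not uniform tail convergence.
\item For self-loops, $W\asymp\hat Y_I^2$, so an $o(t^{-1})$ bound would not make $\EE_U[W1_{\{V<W/2,\,W>M\}}]$ small. What does work is a deterministic comparison under the comonotone coupling. A block sum dominates its largest term, so $(\blacksquare)$ gives $\PP(\hat X^{(\ell)}_I>y)\ge(1-\epsilon)Ay^{-\gamma}$ for $y\ge y_0$ and $\ell\ge\ell_0$. Hence $F_\ell^{\leftarrow}(u)\ge\frac12F^{\leftarrow}(u)$ for $u\ge u_0$, so $V\ge W/4$ on $\{W>M\}$ for $M$ large, which settles the loops. (Note also that $\hat X_I/\hat Y_I\to\sigma_\gamma$, not $1$.)
\item For edges this is not enough. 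The event $w=\delta\hat Y_I\hat Y_J>M$ can occur with $\hat Y_J$ huge and $U_I$ small, so the bad event is governed by the \emph{lower} tail of $\hat X^{(\ell)}_I$. Integrating out $U_J$ gives $\EE_U[\hat Y_J\ee^{-s\hat X_J}]\asymp s^{\gamma-1}$ as $s\downarrow0$. You therefore need something like $\sup_{\ell\ge\ell_0}\EE_U[\hat Y_I(\hat X^{(\ell)}_I)^{\gamma-1}1_{\{U_I<u_1\}}]\to0$ as $u_1\downarrow0$.
\end{itemize}

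The paper's proof does not supply this step either. It bounds $\zeta$ on $E_{\ell,M}\cap\{c^{\neq}>M\}$ and defers $E^{\mathrm c}_{\ell,M}$ to the argument of Lemma~\ref{lem:approx1}. That argument appeals to ``the exponential factors in $(\heartsuit_2)$'', which fails here. Those factors are $\ee^{-(a\wedge c)}$, $\ee^{-(b\wedge c)}$ and $\ee^{-(a\wedge b)}$, all of order $1$ when the $X$-side intensities are small. Meanwhile the accompanying $|a-c|+|b-c|$ is of order $w$.

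In fact the lower-tail condition is a genuine extra hypothesis. Take $\PP(X_1\le x)=1/\log(1/x)$ for small $x$, which is absolutely continuous and compatible with $(\blacksquare)$. With $n=m^\ell$ one gets $\PP(\hat X^{(\ell)}_I\le t)\ge\big(\beta/\log(n^{1-1/\gamma}/t)\big)^n$. Hence $F_\ell^{\leftarrow}(u)\le n^{1-1/\gamma}\ee^{-\beta u^{-1/n}}$ for small $u$, while $F^{\leftarrow}(u)$ is only logarithmically small. So the quantity in the last bullet is $+\infty$. Combined with $\chi\ge\ee^{-v}w-2/\ee$, this gives $\EE_U[S^{\neq}]=\infty$ for every $\ell$.

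So the missing step cannot be filled under the stated assumptions. You would need, for instance, $X_1$ bounded away from $0$ or a suitable negative-moment condition, together with the lower-tail estimate above.
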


\begin{proof}
By Lemma~\ref{lem:entrbd1},
\[
S^{\neq}\big(P(\hat G^{(\ell)}[\hat X]) \mid P(\hat G_{\ell,\gamma,\delta,\eta}[\hat Y])\big)
\leq d^{\neq}(b_{\ell}^{\neq},c^{\neq}),
\]
and, for loops,
\[
S^{=}\big(P(\hat G^{(\ell)}[\hat X])
\mid P(\hat G_{\ell,\gamma,\delta,\eta}[\hat Y])\big)
\leq d^{=}\left(\tfrac12 b_{\ell}^{=},\tfrac12 c^{=}\right).
\]
By exchangeability, it is enough to prove
\[
\EE_U[\zeta(b_{\ell}^{\neq},c^{\neq})]\to0,
\qquad
\EE_U\left[\zeta\left(\tfrac12 b_{\ell}^{=},\tfrac12 c^{=}\right)\right]\to0.
\]

Again we treat the edge term only. The same three-event decomposition works. On $\{c^{\neq}\leq \varepsilon\}$, use
\[
\zeta(a,b) \leq \frac{|a-b|}{b}1_{\{a>b\}} + \ee^{-a}(b-a)1_{\{b>a\}} \leq \frac{a}{b}+b,
\]
together with $(\sharp_3)$, positivity of $c^{\neq}$ and Lemma~\ref{lem:ht.negmom}, to conclude that this contribution vanishes as $\varepsilon \downarrow 0$. On $\{\varepsilon < c^{\neq}\leq M\}$, bounded convergence applies, because $b_{\ell}^{\neq} \to c^{\neq}$ a.s. On $\{c^{\neq} > M\}$, the same event $E_{\ell,M}$ and the estimates above imply
\[
\zeta(b_{\ell}^{\neq},c^{\neq}) \leq C(1+M)\,\ee^{-M/2} \qquad \text{on } E_{\ell,M} \cap \{c^{\neq}>M\}.
\]
Hence the claim follows.
\end{proof}

Combining Lemmas~\ref{lem:approx1}--\ref{lem:approx2}, we get the claim in Theorem~\ref{thm2}.
\end{proof}


\appendix


\section{Stable laws, domains of attraction and coupling}
\label{App}

In this appendix we collect three lemmas that are needed in the proof of Theorem~\ref{thm2}. The key parameters are
\[
\gamma=\frac{\alpha}{\beta}\in(0,1),
\qquad
\sigma_\gamma = \big(A\Gamma(1-\gamma)\big)^{1/\gamma},
\qquad
\delta = B\sigma_\gamma^2,
\qquad
\eta = C\sigma_\gamma.
\]


\paragraph{$\blacktriangleright$ Convergence and coupling.}

The assumption in $(\blacksquare)$ implies that
\[
\PP(X_1^\beta>x)\sim Ax^{-\gamma},
\qquad x\to\infty,
\]
so that, by the stable domain-of-attraction theorem (see \cite[Section XVII.5]{F}),
\[
\hat{X}^{(\ell)}_I = m^{-\ell/\gamma}\sum_{i\in I}X_i^\beta \Longrightarrow \sigma_\gamma Y,
\qquad \ell\to\infty,
\]
where $Y$ is the standard positive $\gamma$-stable random variable with Laplace transform $\EE[\ee^{-\lambda Y}] = \ee^{-\lambda^\gamma}$, $\lambda\in\R_+$.

\begin{lemma} 
\label{lem:ht.coupling}
For $\ell\in\N$, let $F_\ell$ be the distribution function of $\hat X_I^{(\ell)}$, and let $F$ be the distribution function of $\sigma_\gamma Y$. Then the following hold:
 \begin{enumerate} 
\item[(i)] 
$F_\ell$ and $F$ are continuous distribution functions. 
\item[(ii)] 
If $U_I$, $I\in[N_{L-\ell}]$, are i.i.d.\ $\mathrm{UNIF}[0,1]$, then 
\[ 
\hat X_I^{(\ell)}=F_\ell^{\leftarrow}(U_I), \qquad \hat Y_I=\sigma_\gamma^{-1}F^{\leftarrow}(U_I), \qquad I\in[N_{L-\ell}], 
\] 
define two coupled families such that $\{\hat X_I^{(\ell)}\}_I$ is i.i.d.\ with the block-sum law above, while $\{\hat Y_I\}_I$ is i.i.d.\ with the standard positive $\gamma$-stable law. Here, 
\[ 
G^{\leftarrow}(u)=\inf\{x\geq0\colon G(x)\geq u\} 
\] 
denotes the generalised inverse, or quantile function, of a distribution function $G$. 
\item[(iii)] For Lebesgue-a.e.\ $u\in(0,1)$, 
\[ 
F_\ell^{\leftarrow}(u)\longrightarrow F^{\leftarrow}(u). 
\] 
Consequently, for every fixed $I$, 
\[ 
\hat X_I^{(\ell)}-\sigma_\gamma\hat Y_I \longrightarrow 0 \qquad\text{a.s.} 
\] 
\end{enumerate} 
\end{lemma}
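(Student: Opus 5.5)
Part (i) comes first. The law of $X_1$ is absolutely continuous by $(\blacksquare)$, and $x\mapsto x^\beta$ is a $C^1$ bijection of $(0,\infty)$ with nonvanishing derivative. Hence $X_1^\beta$ is absolutely continuous. A finite sum $\sum_{i\in I}X_i^\beta$ of independent absolutely continuous variables is again absolutely continuous, since its density is a convolution of densities. Multiplying by the deterministic factor $m^{-\ell/\gamma}$ preserves this, so $F_\ell$ is continuous. For $F$ we use the standard fact that the positive $\gamma$-stable law with Laplace transform $\ee^{-\lambda^\gamma}$ has a density, even a smooth one; this can be cited from \cite[Section XIII.6]{F}, or derived from integrability of its characteristic function, whose modulus is $\exp(-c|t|^\gamma)$ with $c>0$. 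Scaling by $\sigma_\gamma$ keeps $F$ continuous. In fact $F$ is strictly increasing on $(0,\infty)$, because the stable density is positive there.

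Part (ii) is the quantile transform. For any distribution function $G$ and $U\sim\mathrm{UNIF}[0,1]$, the equivalence $G^{\leftarrow}(u)\le x \iff u\le G(x)$ gives $\PP(G^{\leftarrow}(U)\le x)=G(x)$. Applying this with $G=F_\ell$ shows that $\hat X^{(\ell)}_I$ has the block-sum law. Applying it with $G=F$ shows that $F^{\leftarrow}(U_I)\stackrel{d}{=}\sigma_\gamma Y$, so $\hat Y_I=\sigma_\gamma^{-1}F^{\leftarrow}(U_I)$ is standard positive $\gamma$-stable. Since the $U_I$ are i.i.d., each family is i.i.d. The two families are coupled because they use the same $U_I$. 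To obtain the coupling of the microscopic $X$ as well, we may realise $(X_i)_{i\in I}$ conditionally on its block sum through a further independent uniform variable. This is only needed to make $X(U)$ well defined and is not part of the claim.

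Part (iii) is the main step. From the appendix display we have the weak convergence $F_\ell\Rightarrow F$, which holds at every $x$ because $F$ is continuous. The classical Skorokhod-type fact is that weak convergence of distribution functions implies $F_\ell^{\leftarrow}(u)\to F^{\leftarrow}(u)$ at every continuity point $u\in(0,1)$ of $F^{\leftarrow}$. Since $F^{\leftarrow}$ is monotone, it has at most countably many discontinuities, so convergence holds for Lebesgue-a.e.\ $u$. In our case $F$ is continuous and strictly increasing, so $F^{\leftarrow}$ is continuous on $(0,1)$ and the convergence holds for every $u$.

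The argument for the fact runs as follows. Fix $u$ and $\varepsilon>0$, and pick continuity points $x_-<F^{\leftarrow}(u)<x_+$ of $F$ within $\varepsilon$ of $F^{\leftarrow}(u)$. Then $F(x_-)<u$. If $u$ is a continuity point of $F^{\leftarrow}$, we may also arrange $F(x_+)>u$. For large $\ell$ we then have $F_\ell(x_-)<u<F_\ell(x_+)$, which forces $x_-\le F_\ell^{\leftarrow}(u)\le x_+$. Evaluating at $u=U_I$ and using that $U_I$ falls in the null exceptional set with probability zero gives $\hat X^{(\ell)}_I-\sigma_\gamma\hat Y_I=F_\ell^{\leftarrow}(U_I)-F^{\leftarrow}(U_I)\to0$ almost surely.

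The delicate point is not the quantile argument but the input $F_\ell\Rightarrow F$ with the exact constant $\sigma_\gamma=(A\Gamma(1-\gamma))^{1/\gamma}$. We check it via Laplace transforms. The tail $\PP(X_1^\beta>x)\sim Ax^{-\gamma}$ gives, by a Tauberian argument, $1-\EE\,\ee^{-sX_1^\beta}\sim A\Gamma(1-\gamma)s^\gamma$ as $s\downarrow0$. Then
\[
\EE\,\ee^{-\lambda\hat X_I^{(\ell)}}=\big(1-A\Gamma(1-\gamma)\lambda^\gamma m^{-\ell}(1+o(1))\big)^{m^\ell}\to \ee^{-\sigma_\gamma^\gamma\lambda^\gamma},
\]
which is the Laplace transform of $\sigma_\gamma Y$. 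The continuity theorem for Laplace transforms then yields the weak convergence.
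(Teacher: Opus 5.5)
Your proposal is correct and follows the paper's proof step for step: absolute continuity of $X_1^\beta$ and of the stable law for (i), the quantile transform for (ii), and quantile convergence under $F_\ell\Rightarrow F$, evaluated at $u=U_I$, for (iii). Your additions are sound refinements rather than a different route: a self-contained proof of the quantile convergence (which here holds for every $u\in(0,1)$, since $F$ is continuous and strictly increasing), the Laplace-transform check of the constant $\sigma_\gamma$ that the paper only cites from Feller, and the remark on realising $X$ given the block sums (one small slip: absolute continuity of $X_1$ is the extra hypothesis of Theorem~\ref{thm2}, not part of $(\blacksquare)$).
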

 
\begin{proof} 
(i) Because $X_1$ is absolutely continuous, so is $X_1^\beta$, and therefore every finite sum of i.i.d.\ copies of $X_1^\beta$ has a continuous distribution. This proves the continuity of $F_\ell$. The law $F$ is continuous because the positive stable law has a continuous density; see \cite[Section XIII.6]{F}.

\medskip\noindent
(ii) For a continuous distribution function $G$, the probability integral transform gives that $G^{\leftarrow}(U)$ has law $G$ when $U$ is $\mathrm{UNIF}[0,1]$; see \cite{B}. Applying this to $F_\ell$ and $F$ gives the claimed marginal laws. Since the variables $U_I$ are independent, both families are internally i.i.d., while the two families are coupled coordinatewise through the same uniforms.

\medskip\noindent
(iii) Since $\hat X_I^{(\ell)}\Rightarrow\sigma_\gamma Y$ and $F$ is continuous, the quantile convergence theorem for weak convergence gives \[ F_\ell^{\leftarrow}(u)\longrightarrow F^{\leftarrow}(u) \] for Lebesgue-a.e.\ $u\in(0,1)$; see again \cite{B}. Taking $u=U_I(\omega)$ outside the exceptional null set gives the almost sure convergence for every fixed $I$. 
\end{proof}


\paragraph{$\blacktriangleright$ Stable law tails.}

\begin{lemma}
\label{lem:ht.negmom}
Let $Y$ be a standard positive $\gamma$-stable random variable with
$\gamma\in(0,1)$. Then, for every $r>0$,
\[
\EE[Y^{-r}] = \frac{\Gamma(r/\gamma)}{\gamma\,\Gamma(r)} <\infty.
\]
Consequently, if $Y_1,Y_2$ are i.i.d.\ copies of $Y$, then
\[
C^{\neq} = \delta Y_1Y_2,
\qquad
C^{=}=\tfrac12(\delta Y_1^2+\eta Y_1),
\]
satisfy
\[
\EE[(C^{\neq})^{-1}]<\infty,
\qquad
\EE[(C^{=})^{-1}]<\infty.
\]
\end{lemma}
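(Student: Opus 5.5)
The plan is to compute $\EE[Y^{-r}]$ directly from the Laplace transform, via the Gamma-function representation of negative powers, and then to deduce the two consequences by elementary comparisons.

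For $y>0$ and $r>0$ we have
\[
y^{-r}=\frac{1}{\Gamma(r)}\int_0^\infty \lambda^{r-1}\ee^{-\lambda y}\,d\lambda .
\]
Since $Y>0$ a.s.\ and the integrand is nonnegative, Tonelli's theorem lets us exchange expectation and integral. Using $\EE[\ee^{-\lambda Y}]=\ee^{-\lambda^\gamma}$, this gives
\[
\EE[Y^{-r}]=\frac{1}{\Gamma(r)}\int_0^\infty \lambda^{r-1}\ee^{-\lambda^\gamma}\,d\lambda .
\]
Substituting $t=\lambda^\gamma$, so that $d\lambda=\gamma^{-1}t^{1/\gamma-1}\,dt$, turns the integral into
\[
\gamma^{-1}\int_0^\infty t^{r/\gamma-1}\ee^{-t}\,dt=\gamma^{-1}\Gamma(r/\gamma).
\]
This yields the stated formula, which is finite since $r/\gamma>0$. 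No tail information about $Y$ is needed; the only input is the Laplace transform, together with positivity of $Y$ (its law has no atom at $0$, because $\EE[\ee^{-\lambda Y}]\to0$ as $\lambda\to\infty$).

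For the consequences, independence of $Y_1,Y_2$ gives
\[
\EE[(C^{\neq})^{-1}]=\delta^{-1}\,(\EE[Y^{-1}])^2=\delta^{-1}\left(\frac{\Gamma(1/\gamma)}{\gamma}\right)^2<\infty,
\]
using $\delta>0$. For $C^{=}$, split according to whether $\eta>0$ or $\eta=0$.
\begin{itemize}
\item If $\eta>0$, then $C^{=}\geq \tfrac12\eta Y_1$, so $(C^{=})^{-1}\leq 2\eta^{-1}Y_1^{-1}$, whose expectation is finite by the case $r=1$.
\item If $\eta=0$ (allowed since $\eta\in[0,\infty)$), use $C^{=}\geq\tfrac12\delta Y_1^2$ instead, and invoke the case $r=2$: $\EE[Y^{-2}]=\Gamma(2/\gamma)/\gamma<\infty$.
\end{itemize}
In either case $\EE[(C^{=})^{-1}]<\infty$.

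There is no real obstacle here. The only points needing care are justifying the Fubini/Tonelli exchange, which is immediate from nonnegativity, and remembering the case $\eta=0$, where one must use the quadratic term and $r=2$.
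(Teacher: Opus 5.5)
Your proposal is correct and follows the paper's proof essentially step for step: the Gamma-integral representation of $y^{-r}$, Tonelli, the Laplace transform, the substitution $t=\lambda^\gamma$, and independence for $C^{\neq}$. The only difference is your case split for $C^{=}$, which is unnecessary: the paper uses $C^{=}\geq\tfrac12\delta Y_1^2$ for every $\eta\geq0$ together with the case $r=2$, and the correct resulting bound is $(C^{=})^{-1}\leq 2\delta^{-1}Y_1^{-2}$, whereas the paper misprints the constant as $\tfrac12\delta$.
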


\begin{proof}
For $r>0$,
\[
y^{-r} = \frac{1}{\Gamma(r)} \int_0^\infty \lambda^{r-1}\ee^{-\lambda y}\,\mathrm{d}\lambda, \qquad y>0.
\]
Applying Fubini's theorem and using $\EE[\ee^{-\lambda Y}] = \ee^{-\lambda^\gamma}$, we obtain
\[
\EE[Y^{-r}] = \frac{1}{\Gamma(r)} \int_0^\infty \lambda^{r-1}\ee^{-\lambda^\gamma}\,\mathrm{d}\lambda.
\]
With the substitution $t=\lambda^\gamma$, this becomes
\[
\EE[Y^{-r}] = \frac{1}{\gamma\,\Gamma(r)} \int_0^\infty t^{r/\gamma-1}\ee^{-t}\,\mathrm{d}t
= \frac{\Gamma(r/\gamma)}{\gamma\,\Gamma(r)}.
\]
For $C^{\neq}$, independence gives
\[
\EE[(C^{\neq})^{-1}] = \delta^{-1}\,\EE[Y_1^{-1}]\,\EE[Y_2^{-1}] < \infty.
\]
For $C^{=}$, since $\eta\geq0$,
\[
C^{=} = \tfrac12(\delta Y_1^2+\eta Y_1)
\geq \tfrac12 \delta Y_1^2.
\]
Hence
\[
(C^{=})^{-1} \leq \tfrac12 \delta Y_1^{-2},
\]
and the right-hand side has finite expectation.
\end{proof}


\paragraph{$\blacktriangleright$ Transcription.}

\begin{lemma}
\label{lem:ht.graph-laws}
Let $\hat G^{(\ell)}[\hat X]$ be the inhomogeneous random graph on $[N_{L-\ell}]$ with edge densities
\[
b_\ell^{\neq}(I,J) = B\hat X_I^{(\ell)}\hat X_J^{(\ell)}, \qquad I\neq J,
\]
and self-loop densities
\[
\tfrac12 [b_\ell^{\neq}(I,I)+b_\ell^{=}(I)]
= \tfrac12\big[B\big(\hat X_I^{(\ell)}\big)^2+C\hat X_I^{(\ell)}\big].
\]
Let $\hat G_{\ell,\gamma,\delta,\eta}[\hat Y]$ be the inhomogeneous stable-law random graph on $[N_{L-\ell}]$ with edge densities
\[
c_\ell^{\neq}(I,J) = \delta \hat Y_I\hat Y_J, \qquad I\neq J,
\]
and self-loop densities
\[
\tfrac12\big[c_\ell^{\neq}(I,I)+c_\ell^{=}(I)\big] = \tfrac12\big[\delta(\hat Y_I)^2+\eta\hat Y_I\big].
\]
Then, for every measurable set $\mathcal A$ of graphs on $[N_{L-\ell}]$,
\[
\begin{aligned}
\EE_U\big[P\big(\hat G^{(\ell)}[\hat X]\in\mathcal A\mid U\big)\big]
&= P\big(\hat G^{(\ell)}[\hat X]\in\mathcal A\big),\\
\EE_U\big[P\big(\hat G_{\ell,\gamma,\delta,\eta}[\hat Y]\in\mathcal A\mid U\big)\big]
&= P\big(G_{\ell,\gamma,\delta,\eta}[Y]\in\mathcal A\big).
\end{aligned}
\]
In the first line, the law on the right-hand side is the annealed graph law obtained by first sampling an i.i.d.\ family of block variables with the same law as $\hat X_I^{(\ell)}$ and then sampling edges and self-loops conditionally independently. In the second line, the law on the right-hand side is the annealed stable-law graph law.
\end{lemma}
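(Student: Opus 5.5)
The plan is to prove both identities by the same two-stage mechanism: first describe the conditional law of the graph given $U$, then integrate over $U$ using the marginal laws of the coupled weight families from Lemma~\ref{lem:ht.coupling}(ii). Conditionally on $U$, the weights $\{\hat X_I^{(\ell)}\}_I$ are deterministic functions $F_\ell^{\leftarrow}(U_I)$. The graph $\hat G^{(\ell)}[\hat X]$ then has independent edges and self-loops. The probability of a given graph $\mathcal G$ on $[N_{L-\ell}]$ is an explicit finite product
\[
\Pi(\mathcal G;x)=\prod_{I<J}\phi^{\neq}\big(x_I,x_J;\mathcal G\big)\prod_{I}\phi^{=}\big(x_I;\mathcal G\big),
\]
evaluated at $x=\hat X^{(\ell)}$. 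Here $\phi^{\neq}(x,y;\mathcal G)$ equals $1-\ee^{-Bxy}$ or $\ee^{-Bxy}$ according to whether $\{I,J\}$ is an edge of $\mathcal G$, and similarly for $\phi^{=}$ with exponent $\tfrac12(Bx^2+Cx)$. I will note that $\Pi(\mathcal G;\cdot)$ is a bounded continuous, hence Borel, function on $\R_+^{N_{L-\ell}}$, so $P(\hat G^{(\ell)}[\hat X]\in\mathcal A\mid U)=\sum_{\mathcal G\in\mathcal A}\Pi(\mathcal G;\hat X^{(\ell)}(U))$ is a measurable, $[0,1]$-valued function of $U$. Since the graph space is finite, measurability of $\mathcal A$ is automatic.

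Taking $\EE_U$, the expectation depends on $U$ only through the joint law of $(\hat X_I^{(\ell)})_{I\in[N_{L-\ell}]}$. By Lemma~\ref{lem:ht.coupling}(ii), this joint law is the product of $N_{L-\ell}$ copies of $F_\ell$: the $U_I$ are i.i.d.\ and $F_\ell^{\leftarrow}(U_I)\sim F_\ell$ because $F_\ell$ is continuous (Lemma~\ref{lem:ht.coupling}(i)). Hence
\[
\EE_U\Big[\textstyle\sum_{\mathcal G\in\mathcal A}\Pi(\mathcal G;\hat X^{(\ell)})\Big]=\int \textstyle\sum_{\mathcal G\in\mathcal A}\Pi(\mathcal G;x)\,F_\ell^{\otimes N_{L-\ell}}(\mathrm dx).
\]
The right-hand side is by definition the annealed law described in the statement: sample i.i.d.\ block variables with law $F_\ell$, then draw edges and loops conditionally independently. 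This gives the first identity.

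For the second identity I will repeat the argument with $x=\hat Y=\sigma_\gamma^{-1}F^{\leftarrow}(U_I)$, using the densities $\delta xy$ and $\tfrac12(\delta x^2+\eta x)$. The point to check is that $\hat Y_I$ has the standard positive $\gamma$-stable law. Since $F$ is the law of $\sigma_\gamma Y$ and is continuous, $F^{\leftarrow}(U_I)\stackrel{d}{=}\sigma_\gamma Y$, so $\hat Y_I\stackrel{d}{=}Y$, and independence over $I$ comes from the $U_I$. The integral therefore becomes the annealed law of $G_{\ell,\gamma,\delta,\eta}[Y]$, whose weights are i.i.d.\ standard stable with connection functions $g^{\neq}_\delta$, $g^{=}_\eta$. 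The only place requiring care is this marginal identification, specifically the scaling by $\sigma_\gamma^{-1}$ matching the definitions of $\delta=B\sigma_\gamma^2$ and $\eta=C\sigma_\gamma$. The rest is Fubini on a finite sum of bounded measurable functions, which is routine.
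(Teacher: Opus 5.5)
Your proposal is correct and follows the paper's proof. Conditional on $U$ the graph is an inhomogeneous random graph with deterministic parameters; Lemma~\ref{lem:ht.coupling}(ii) identifies the joint law of $\hat X^{(\ell)}$ (i.i.d.\ block-sum) and of $\hat Y$ (i.i.d.\ standard stable), and integrating over $U$ gives the annealed laws. Your explicit product formula and finite-sum Fubini step only spell out what the paper leaves implicit. One side remark: the matching $\delta=B\sigma_\gamma^2$, $\eta=C\sigma_\gamma$ is not needed for this lemma, which uses only that $\hat Y_I$ is standard stable and that both graphs use $g^{\neq}_\delta$, $g^{=}_\eta$; that matching matters elsewhere, for $b_\ell^{\neq}-c_\ell^{\neq}\to0$.
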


\begin{proof}
Conditional on $U$, the two graphs are inhomogeneous random graphs with deterministic connection parameters. By Lemma~\ref{lem:ht.coupling}(ii), the random array $\hat X^{(\ell)}$ has the same law as an i.i.d.\ family of block variables
\[
m^{-\ell/\gamma}\sum_{i\in I}X_i^\beta,
\]
and the random array $\hat Y$ has the same law as an i.i.d.\ family of standard positive $\gamma$-stable variables. Therefore, after integrating the conditional graph law over $U$, we obtain exactly the annealed law obtained by first sampling the weights and then sampling the edges and self-loops conditionally independently given these weights.
\end{proof}



\end{document}